\pgfplotsset{compat=newest}
\newtheorem{remark}{Remark}
\newtheorem{proposition}{Proposition}
\def\R{\mathbb{R}}
\def\N{\mathbb{N}}
\def\prox{\mathrm{prox}}
\def\lse{\mathrm{lse}}
\def\P{\mathrm{p}}
\def\argmin{\mathop{\mathrm{argmin}}}
\begin{document}

\title{Multiview Attenuation Estimation and Correction}

\author{Valentin~Debarnot\thanks{V. Debarnot is a student at INSA, Universit\'e de Toulouse, France}, Jonas~Kahn and Pierre~Weiss
\thanks{J. Kahn \& P. Weiss are with IMT and ITAV, CNRS and Universit\'e de Toulouse, France}
}

\markboth{Submitted}%
{Absorption from multi-view imaging}

\maketitle

\begin{abstract}
Measuring attenuation coefficients is a fundamental problem that can be solved with diverse techniques such as X-ray or optical tomography and lidar. 
We propose a novel approach based on the observation of a sample from a few different angles. 
This principle can be used in existing devices such as lidar or various types of fluorescence microscopes. 
It is based on the resolution of a nonlinear inverse problem. 
We propose a specific computational approach to solve it and show the well-foundedness of the approach on simulated data. 
Some of the tools developed are of independent interest. 
In particular we propose an efficient method to correct attenuation defects, new robust solvers for the lidar equation as well as new efficient algorithms to compute the Lambert W function and the proximal operator of the logsumexp function in dimension 2.
\end{abstract}



\section{Introduction}

The ability to analyze the composition of gases in the atmosphere, the organization of a biological tissue, or the state of organs in the human body has invaluable scientific and societal repercussions. 
These seemingly unrelated issues can be solved thanks to a common principle: rays traveling through the sample are attenuated and this attenuation provides an indirect measurement of absorption coefficients.
This is the basis of various devices such as X-ray and optical projection tomography \cite{natterer1986mathematics,sharpe2002optical,vermeer2014depth} or lidar \cite{weitkamp2006lidar}. 
The aim of this paper is to provide an alternative approach based on the observation of the sample from a few different angles.

\subsection{The basic principle}\label{subsec:principle}

Let us provide a flavor of the proposed idea in an idealized 1D system. 
Assume that two measured signals $u_1$ and $u_2$ are formed according to the following model:
\begin{equation}\label{eq:eq1}
 u_1(x) = \beta(x) \exp\left(-\int_{0}^x \alpha(t)\,dt\right) \mbox{ for } x\in [0,1]
\end{equation}
and
\begin{equation}\label{eq:eq2}
 u_2(x) = \beta(x) \exp\left(-\int_{x}^1 \alpha(t)\,dt\right) \mbox{ for } x\in [0,1].
\end{equation}
The function $\beta:[0,1]\to \R_+$ will be referred to as a density throughout the paper. 
It may represent different physical quantities such as backscatter coefficients in lidar or fluorophore densities in microscopy. 
The function $\alpha:[0,1]\to \R_+$ will be referred to as the attenuation and may represent absorption or extinction coefficients. 
The signals $u_1$ and $u_2$ can be interpreted as measurements of the same scene under opposite directions.
Equations \eqref{eq:eq1} and \eqref{eq:eq2} coincide with the Beer-Lambert law that is a simple model to describe attenuation of light in absorbing media. 
The question tackled in this paper is: \emph{can we recover both $\alpha$ and $\beta$ from the knowledge of $u_1$ and $u_2$?}

Under a positivity assumption $\beta(x)>0$ for all $x\in [0,1]$, the answer is straightforwardly positive. 
Setting $v(x)=\log\left(\frac{u_2(x)}{u_1(x)}\right)$, equations \eqref{eq:eq1} and \eqref{eq:eq2} yield:
\begin{equation}
 v(x) = \int_{0}^x \alpha(t)\,dt -\int_{x}^1 \alpha(t)\,dt.
\end{equation}
Therefore
\begin{equation}\label{eq:directinversionformula}
 \alpha(x)= \frac{1}{2}\frac{\partial}{\partial_x}v(x)
\end{equation}
and
\begin{equation}\label{eq:directinversionformula2}
 \beta(x) = \frac{u_1(x)}{\exp\left(-\int_{0}^x \alpha(t)\,dt\right)}.
\end{equation}
Unfortunately, formulas \eqref{eq:directinversionformula} and \eqref{eq:directinversionformula2} only have a theoretical interest: they cannot be used in practice since computing the derivative of a log of a ratio is extremely unstable from a numerical point of view.
We will therefore design a numerical procedure based on a Bayesian estimator to retrieve the density $\alpha$ and attenuation coefficient $\beta$ in a stable and efficient manner. It is particularly relevant when the data suffer from Poisson noise. 

\subsection{Contributions}

This paper contains various contributions listed below.
\begin{itemize}
 \item We show that it is possible to retrieve attenuation coefficients from multiview measurements in different systems such as lidar, confocal or SPIM microscopes. To the best of our knowledge, this fact was only known in lidar until now  \cite{kunz1987bipath,hughes1988double,cuesta2010lidar}\footnote{We became aware of these works while finishing this work.}.
 
 Fig. \ref{fig:illustrationofprinciple} summarizes the proposed idea. The attenuation, which is usually considered as a nuisance in confocal microscopy is exploited to measure attenuation. The algorithm successfully retrieves estimates of the density and attenuation from two attenuated and noisy images.
Let us mention that some researchers already proposed to measure absorption and correct attenuation by combining optical projection tomography and SPIM imaging \cite{mayer2014optispim}. The principle outlined here shows that much simpler optical setups (a traditional confocal microscope) theoretically allows estimating the same quantities. 
 \item We propose novel Bayesian estimators for the density $\alpha$ and the attenuation $\beta$. To the best of our knowledge, this is the first attempt to provide a clear statistical framework to recover these quantities. 
 \item The proposed estimators are solutions of a nonconvex problem. We develop a specific nonlinear programming solver based on a warm-start convex initialization. This leads us to develop an efficient algorithm to compute the proximal operator of the logsumexp function in dimension 2 as well as a new algorithm to compute the Lambert W function.
 \item The proposed estimators also seem to be novel for the standard mono-view inverse problem in lidar and for correcting attenuation defects under a Poisson noise assumption.
 \item We perform a numerical validation of the proposed ideas on synthetic data, showing the well-foundedness of the approach. The validation of the method on specific devices is left as an outlook for future works.
\end{itemize}

\begin{figure}
\captionsetup[subfigure]{justification=centering}
 \centering
  \begin{subfigure}[b]{0.35\textwidth}
    \includegraphics[width=\textwidth]{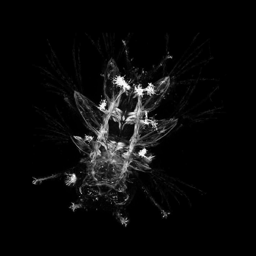}
    \caption{Density $\beta$}\label{fig:density}
  \end{subfigure}
  \begin{subfigure}[b]{0.35\textwidth}
    \includegraphics[width=\textwidth]{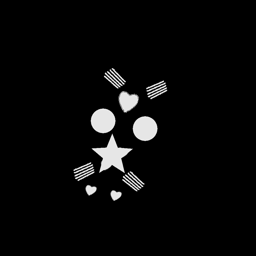}
    \caption{Attenuation $\alpha$}\label{fig:attenuation}
  \end{subfigure}
  \begin{subfigure}[b]{0.35\textwidth}
    \includegraphics[width=\textwidth]{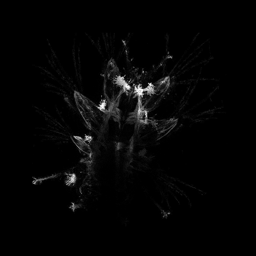}
    \caption{Image $u_1$}\label{fig:u1}
  \end{subfigure}
  \begin{subfigure}[b]{0.35\textwidth}
    \includegraphics[width=\textwidth]{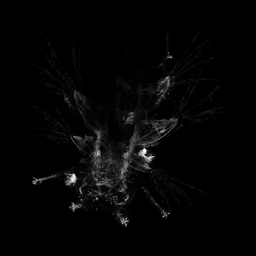}
    \caption{Image $u_2$}\label{fig:u2}
  \end{subfigure}
  \begin{subfigure}[b]{0.35\textwidth}
    \includegraphics[width=\textwidth]{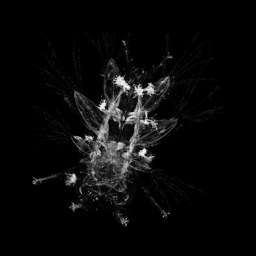}
    \caption{Estimated density\\ SNR=22.4dB}\label{fig:alpha}
  \end{subfigure}
  \begin{subfigure}[b]{0.35\textwidth}
    \includegraphics[width=\textwidth]{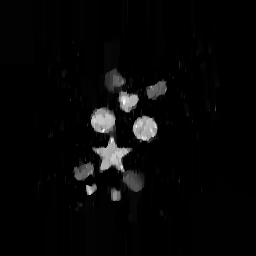}
    \caption{Estimated attenuation\\ SNR=9.4dB}\label{fig:beta}
  \end{subfigure}  
  \caption{Illustration of the contribution. A sample (here an insect) has a fluorophore density $\alpha$ shown in Fig. \ref{fig:density} and an attenuation map $\beta$ shown in Fig.\ref{fig:attenuation}.
The two measured images $u_1$ and $u_2$ are displayed in Fig. \ref{fig:u1} and \ref{fig:u2}.
As can be seen, they are attenuated differently (top to bottom and bottom to top) since the optical path is reversed. 
From these two images, our algorithm provides a reliable estimate of each map in Fig. \ref{fig:alpha} and \ref{fig:beta} despite Poisson noise.}\label{fig:illustrationofprinciple}
\end{figure}

\section{Applications}\label{sec:applications}

In this section, we show various applications where the methodology proposed in this paper can be applied.
We finish by describing precisely the discrete model considered in our numerical experiments.

\subsection{Lidar}

In lidar, an object (atmosphere, gas,...) is illuminated with a laser beam. 
Particles within the object reflect light. 
The time to return of the reflected light is then measured with a scanner. 
The received signal $u_1(x)$  is the backscattered mean power at altitude $x$ for a specific wavelength.
The density $\beta$ corresponds to the backscattered coefficient, while $\alpha$ is called extinction coefficient. The equation relating $u_1$ to $\alpha$ and $\beta$ is: 
\begin{equation}\label{eq:lidareq}
 u_1(x) = \mathcal{P}\left( \frac{C}{x^2} \beta(x) \exp\left( -2 \int_{0}^x \alpha(t)\, dt\right) \right),
\end{equation}
where $C$ is independent of $x$. 
The notation $\mathcal{P}(z)$ stands for a Poisson distributed random variable of parameter $z$. 
The Poisson distribution is a rather good noise model in lidar, since measurements describe a number of detected photons. 
The term $\frac{C}{x^2} \beta(x)$ appears in the lidar equation \eqref{eq:lidareq} instead of simply $\beta$.
The algorithm developed later will allow retrieving $\frac{C}{x^2} \beta(x)$ instead of $\beta$.
This is not a problem since there is a direct known relationship between both. 
\begin{remark}
In Raman lidar, the coefficient $\beta$ corresponds to the  molecular density of the atmosphere, while $\alpha$ is the sum of extinction coefficients at different wavelengths. The theory developed herein also applied to this setting.
\end{remark}

When the backscatter coefficient $\beta$ has a known analytical relationship with the extinction coefficient $\alpha$, direct inversion is possible. A popular method is Klett's formula \cite{klett1981stable} for instance. Alternative formula exist \cite{ansmann1990measurement} when the backscatter coefficient is known. 
The recent trend consists in using iterative methods coming from the field of inverse problems \cite{shcherbakov2007regularized,pornsawad2008ill,garbarino2016expectation}, leading to improved robustness. 
All these approaches crucially depend on a precise knowledge of the backscatter coefficient. 
This is a strong hypothesis that is often rough or unreasonable in practice. 

To overcome this issue, a few authors proposed to use two opposite lidars and to retrieve the attenuation coefficient using equation \eqref{eq:directinversionformula} \cite{kunz1987bipath,hughes1988double,cuesta2010lidar}. 
The stability to noise was ensured by linear filtering of the input and output data.
The algorithms proposed later will provide a more robust and statistically motivated approach.

\subsection{Confocal microscopy}

The principle proposed herein can also be applied to fluorescence microscopy, in particular confocal and multi-SPIM microscopy. To the best of our knowledge, this idea is novel.
We expect it to be relevant when absorption dominates scattering and diffusion. 
A particular case of interest should be optically cleared samples that are commonly used in optical projection tomography \cite{sharpe2002optical}. 

In confocal microscopy, a laser beam is focused at a specific point in 3D space and excites fluorophores. The emitted light is then collected on a camera. A 3D image can be created by scanning the whole sample volume with the focal spot, see Fig. \ref{fig:confocal}.

\begin{figure}
 \centering
 \includegraphics[width=0.35\textwidth]{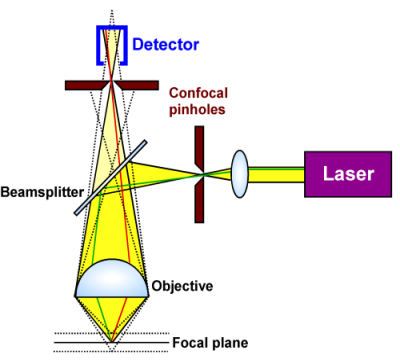}
 \caption{Scheme of a confocal microscope}\label{fig:confocal}
\end{figure}

In order to apply the proposed principle, two images from opposite sides have to be taken. 
This can be done by either rotating the sample or using $4\pi$ microscopes \cite{cremer1974considerations,hell1992properties}. 
The image formation model can then be written as follows:
\begin{equation}\label{eq:image1}
 u_1(x,y,z) = \mathcal{P}\left( C\beta(x,y,z) \exp\left( - (A^+_t \alpha_t+A^+_e \alpha_e)(x,y,z)\right) \right)
\end{equation}
and
\begin{equation}\label{eq:image2}
  u_2(x,y,z) = \mathcal{P}\left( C\beta(x,y,z) \exp\left( - (A^-_t \alpha_t+A^-_e \alpha_e)(x,y,z) \right)\right).
\end{equation}
The coefficients $\alpha_t$ and $\alpha_e$ refer to the attenuation coefficients for the transmitted and emitted light respectively. They may differ since the excitation light is typically red, while the emitted light is usually green. 
In order to retrieve them, we will assume a linear relationship of type $\alpha = \alpha_e = c \alpha_t$ between both for a certain constant $c$. 
The symbols $A_t^+$, $A_e^+$, $A_t^-$ and $A_t^+$ are integral operators describing the optical path for the transmitted and emitted light respectively. In the numerical experiments of this paper, we will simply use the following model:
\begin{equation}
 (A^t_1 \alpha_t)(x,y,z)= \int_{0}^z \alpha_t(x,y,t)\,dt
\end{equation}
\begin{equation}
 (A^t_2 \alpha_t)(x,y,z)= \int_{1}^z \alpha_t(x,y,t)\,dt
\end{equation}
\begin{equation}
 (A^e_1 \alpha_e)(x,y,z)= \int_{0}^z \alpha_e(x,y,t)\,dt
\end{equation}
\begin{equation}
 (A^e_2 \alpha_e)(x,y,z)= \int_{1}^z \alpha_e(x,y,t)\,dt.
\end{equation}
The proposed algorithm also applies to more general linear operators integrating the attenuation coefficients along cones. 
With the mentioned hypotheses and setting $C=1$, equations \eqref{eq:image1} and \eqref{eq:image2} simplify to:
\begin{equation}\label{eq:image1_}
 u_1(x,y,z) = \mathcal{P}\left(\beta(x,y,z) \exp\left( - (A_1 \alpha)(x,y,z)\right) \right)
\end{equation}
and
\begin{equation}\label{eq:image2_}
 u_2(x,y,z) = \mathcal{P}\left(\beta(x,y,z) \exp\left( - (A_2\alpha)(x,y,z)\right)\right),
\end{equation}
where $A_1 = A^t_1 + c A^e_1$ and $A^2 = A^t_2 + c A^e_2$. 

\subsection{Multiview SPIM}

SPIM is an acronym for Selective Plane Illumination Microscopy \cite{huisken2004optical}.
An alternative name is Light Sheet Fluorescence Microscopy (LSFM). 
Its principle is explained in Fig. \ref{fig:SPIM}, left: a laser beam passing through a cylindrical lens focuses in only one direction, getting the shape of a light sheet. This light sheet excites fluorophores in a sample. The light sheet coincides with the focal plane of a microscope, allowing imaging 2D sections of the object. A 3D image can be reconstructed by shifting the sample or the light sheet and stacking the obtained images.

Many different variants of this microscope exist. Here, we are interested in the multiview SPIM \cite{huisken2007even,krzic2012multiview,tomer2012quantitative,chhetri2015whole}, which simultaneously produces 4 images generated with different optical paths, see Fig. \ref{fig:SPIM}, right and Fig. \ref{fig:schememspim}. Similarly to the confocal microscope, let $\alpha_t$ and $\alpha_e$ denote the attenuation maps for the transmitted and emitted light respectively. The four images can be written as:
\begin{equation}
 u_{1,+} = \mathcal{P}\left( C\beta \exp(-A_{(1,t,+)}\alpha_t - A_{(1,e,+)}\alpha_e)\right)
\end{equation}
\begin{equation}
 u_{1,-} = \mathcal{P}\left( C\beta \exp(-A_{(1,t,-)}\alpha_t - A_{(1,e,-)}\alpha_e)\right)
\end{equation}
\begin{equation}
 u_{2,+} = \mathcal{P}\left( C\beta \exp(-A_{(2,t,+)}\alpha_t - A_{(2,e,+)}\alpha_e)\right)
\end{equation}
\begin{equation}
 u_{2,-} = \mathcal{P}\left( C\beta \exp(-A_{(2,t,-)}\alpha_t - A_{(2,e,-)}\alpha_e)\right).
\end{equation}
The eight operators $A_{\cdot,\cdot,\cdot}$ describe integrals along the different directions for the emitted and transmitted waves. It is relatively easy to show, using a reasoning similar to that of paragraph \ref{subsec:principle}, that the four images allow retrieving the attenuation coefficients $\alpha_e$ and $\alpha_t$, without assuming linear relationships between them. This is an advantage over the simpler confocal microscope.

\begin{figure}
 \centering
 \includegraphics[width=0.35\textwidth]{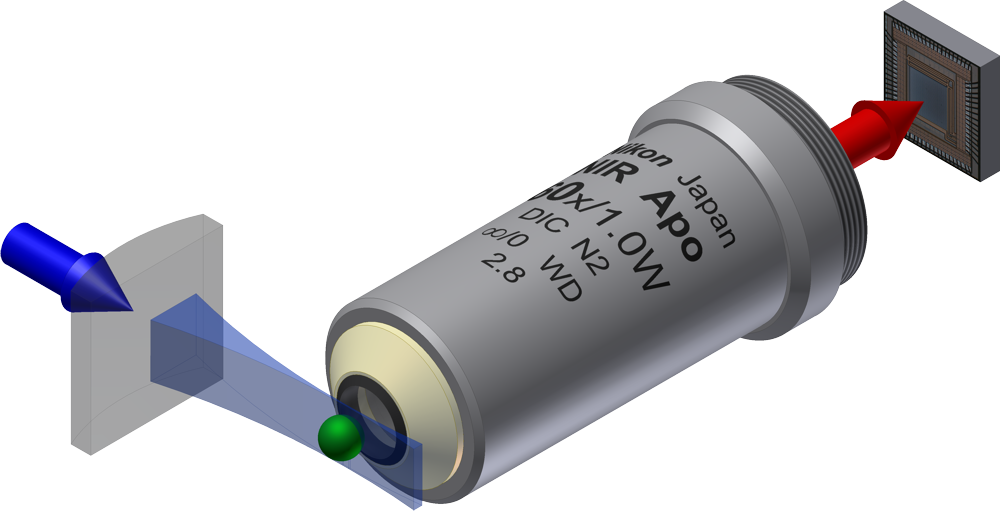} \ \ \ \ 
 \includegraphics[width=0.35\textwidth]{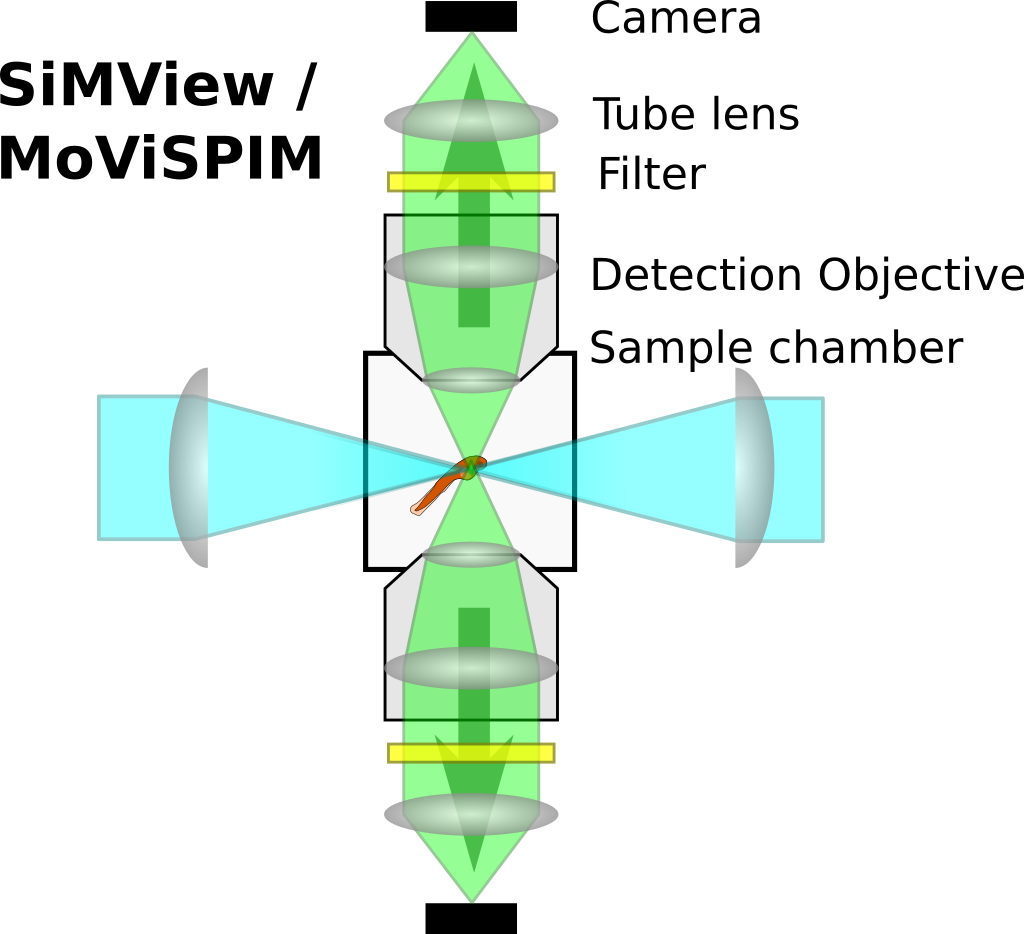}
 \caption{Left: Scheme of a light sheet or SPIM microscope. Right: Scheme of a multiview SPIM. The sample is illuminated from two opposite sides and images are formed in two opposite planes parallel to the light sheet.}\label{fig:SPIM}
\end{figure}

 \begin{figure}
	    \centering
			\begin{tikzpicture}
			\draw[thin]  (0.2,0.2) rectangle (3.8,3.8);

			\draw[thick,red] (0,2.1) -- (1.9,2.1) ;
			\draw[thick,green] (1.9,2.1) -- (1.9,4);
			\draw[->,thick,red] (0,2.1) -- (1,2.1) ;
			\draw[->,thick,green] (1.9,2.1) -- (1.9,3) ;
			\draw (1.5,4) node[above]{$u_{1,+}$};
			
			\draw[thick,red] (0,1.9) -- (1.9,1.9);
			\draw[thick,green] (1.9,1.9) -- (1.9,0);
			\draw[->,thick,red] (0,1.9) -- (1,1.9) ;
			\draw[->,thick,green] (1.9,1.9) -- (1.9,1) ;
			\draw (1.5,0) node[below]{$u_{2,+}$};

			\draw[thick,red] (4,2.1) -- (2.1,2.1);
			\draw[thick,green] (2.1,2.1) -- (2.1,4);
			\draw[->,thick,red] (4,2.1) -- (3,2.1) ;
			\draw[->,thick,green] (2.1,2.1) -- (2.1,3) ;
			\draw (2.5,4) node[above]{$u_{1,-}$};

			\draw[thick,red] (4,1.9) -- (2.1,1.9) ;
			\draw[thick,green] (2.1,1.9) -- (2.1,0) ;
			\draw[->,thick,red] (4,1.9) -- (3,1.9) ;
			\draw[->,thick,green] (2.1,1.9) -- (2.1,1) ;
			\draw (2.5,0) node[below]{$u_{2,-}$};
			\end{tikzpicture}
			\caption{Light propagation for the 4 multiview SPIM images. In red, the excitation path (laser). In green, the emission path (fluorophores).}\label{fig:schememspim}
 \end{figure}
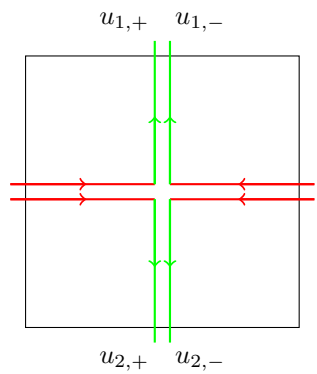

\subsection{Summary}
 
We showed three different applications where measuring attenuation could be attacked with a similar methodology. 
In all mentioned applications, we have access to a set of $m$ signals $(u_i)_{1\leq i \leq m}$ with the following expression:
\begin{equation}\label{eq:mainmodelequation}
 u_i = \mathcal{P}\left( \beta \exp(-\boldsymbol{A}_i \boldsymbol{\alpha})\right),
\end{equation}
where $\boldsymbol{\alpha}$ is a concatenation of attenuation coefficients at different wavelengths and $\boldsymbol{A}$ is a concatenation of integral operators. 
In this paper, we will focus on the simplest setting where only two views are available. 
We developed the algorithms in such a way that their extension to an arbitrary number of views be rather straightforward. 
In addition, we will assume that the two views are opposite in the numerical experiments. This is a requirement in 1D, but not when dealing with 2D or 3D images.

\section{MAP estimator and numerical evaluation}\label{sec:bayes}

\subsection{The discretized model}

The discrete model considered in this paper reads:
\begin{equation}\label{eq:u1u2}
\left\{
\begin{array}{ll}
 u_1 &= \mathcal{P}\left( \beta \exp(-A_1 \alpha)\right)\\
 u_2 &= \mathcal{P}\left( \beta \exp(-A_2 \alpha)\right).
\end{array}\right.
\end{equation}
The signals $u_1$, $u_2$, $\beta$ and $\alpha$ are assumed to be nonnegative and belong $\R^n$, where $n=n_1\hdots n_d $ denotes the number of observations and $d$ is the space dimension. 
The value of a vector $u_1$ at location $i=(i_1,\hdots, i_d)$ will be denoted either $u_1[i]$ or $u_1[i_1,\hdots,i_d]$.
The matrices $A_1$ and $A_2$ in $\R^{n\times n}$ are discretization of linear integral operators. 
In our numerical experiments, the product $A_1 u_1$ represents the cumulative sum of $u_1$ along one direction and the product $A_2u_2$ represents the cumulative sum of $u_2$ in the opposite direction. 
For instance, for a 1D signal, we set:
\begin{equation}
(A_1 u) [i] = \sum_{j=1}^i u_1[j].
\end{equation}
Therefore, matrix $A_1$ has the following lower triangular shape:
\begin{equation}\label{eq:matrixA1}
 A_1 = \begin{pmatrix}
        1 & 0 & 0 & 0 & \hdots & 0\\
        1 & 1 & 0 & 0 & \hdots & 0 \\
        1 & 1 & 1 & 0 & \hdots & 0\\
	   \hdots & \hdots & \hdots & \ddots & \hdots & 0 \\
	   1 & 1 & 1 & 1 & \hdots  & 1 
       \end{pmatrix}
\end{equation}

We are now ready to design a Bayesian estimator of $\alpha$ and $\beta$ from model \eqref{eq:u1u2}.

\subsection{A Bayesian estimator}

The Maximum A Posteriori (MAP) estimators $\hat \alpha$ and  $ \hat \beta$ of $\alpha$ and $\beta$ are defined as the maximizers of the conditional probability density:
\begin{equation}
 \max_{\alpha\in \R^n,\beta \in \R^n}  \P(\alpha,\beta | u_1,u_2).
\end{equation}
By using the Bayes rule and a negative log-likelihood, this is equivalent to finding the minimizers of:
\begin{equation}
 \min_{\alpha\in \R^n,\beta\in \R^n} - \log(\P(u_1,u_2| \alpha,\beta)) - \log(\P(\alpha,\beta)).
\end{equation}
Let us evaluate $\P(u_1,u_2| \alpha,\beta)$. To this end, set 
\begin{equation}
 \lambda_1 = \beta\exp(- (A_1\alpha)) \mbox{ and } \lambda_2 = \beta\exp(- (A_2\alpha)).
\end{equation}
Since the distribution of a Poisson distributed random variable with parameter $\lambda$ has the following probability mass function:
\begin{equation}
 \mathbb{P}(X=k)=\frac{\lambda^k e^{-\lambda}}{k!},
\end{equation}
we get:
\begin{equation}
 \P(u_1,u_2| \alpha,\beta) = \sum_{i=1}^n \lambda_1[i] + \lambda_2[i] - u_1[i]\log(\lambda_1[i])-u_2[i]\log(\lambda_2[i]) + C,
\end{equation}
where $C$ is a value that does not depend on $\alpha$ and $\beta$.
Next, we assume that $\alpha$ and $\beta$ are \emph{independent random vectors} with probability distribution functions of type:
\begin{equation}
 \P(\alpha) \propto \exp(-R_\alpha(\alpha)) \mbox{ and } \P(\beta) \propto \exp(-R_\beta(\beta)),
\end{equation}
where $R_\alpha:\R^n\to \R\cup \{+\infty\}$ and $R_\beta:\R^n\to \R\cup \{+\infty\}$  are regularizers describing properties of the density and attenuation maps. In particular, we choose them so as to impose nonnegativity of the estimators:
\begin{equation}
 R_\alpha(\alpha)=+\infty  \textrm{ if } \exists i\in \{1,\hdots,n\}, \alpha[i]<0
\end{equation}
and 
\begin{equation}
 R_\beta(\beta)=+\infty \textrm{ if } \exists i\in \{1,\hdots,n\}, \beta[i]<0.
\end{equation}

Overall, the optimization problem characterizing the MAP estimates reads:
\begin{equation}\label{eq:mainproblem}
 \min_{\alpha \in \R^n, \beta\in \R^n} F(\alpha,\beta) 
\end{equation}
where 
\begin{equation}\label{eq:defF}
 F(\alpha,\beta)=\sum_{i=1}^n \sum_{j=1}^2[\exp(-(A_j\alpha)[i])\beta[i] +u_j[i]((A_j\alpha)[i]-\log(\beta[i]))] + R_\alpha(\alpha) + R_\beta(\beta).
\end{equation}
\begin{remark}
 With an arbitrary number $m$ of views, it suffices to replace $\sum_{j=1}^2$ by $\sum_{j=1}^m$ in the above expression.
 For $m=1$ view, the problem $\min_{\alpha} F(\alpha,\beta)$ allows recovering the attenuation knowing the density: this is an inverse problem met in lidar. To the best of our knowledge, the proposed formulation is novel for this problem.
 The problem $\min_{\beta} F(\alpha,\beta)$ corresponds to correcting the attenuation on the density map. 
 This is also a frequently met problem \cite{rigaut1991high,roerdink1993fft,can2003attenuation,kervrann2004robust} and the proposed approach also seems novel. 
\end{remark}


\begin{remark}
The hypothesis of independence between $\alpha$ and $\beta$ can probably be improved in some applications. 
For instance, in lidar, it is well known that extinction and backscatter coefficients are strongly related. 
Similarly, strongly absorbing parts of biological specimens are likely to have a specific density of fluorophores.
We prefer stating this independence property, since our aim is to derive a generic algorithm.
\end{remark}

\subsection{The overall optimization algorithm}

If $R_\alpha$ and $R_\beta$ are convex functions, then $F$ is convex in each variable separately. Unfortunately, $F$ is usually nonconvex on the product space $\R_+^n\times \R_+^n$ since the 2D function $f(x,y)=\exp(-x)y + x -\log(y)$ is nonconvex.
Finding global minimizers in reasonable times therefore seems complicated, except for specific regularizers $R_\alpha$ and $R_\beta$ that could compensate for the nonconvexity of the data term. The main observation in this paragraph is that it is possible to find a global minimizer of $F$ when $R_\alpha$ is a standard convex regularizer and $R_\beta$ is just the indicator function of the positive orthant.
Set
\begin{equation}
 R_\beta(\beta) = \iota_{\R_+^n}(\beta)=
 \left\{\begin{array}{ll}
 0 & \textrm{ if } \beta[i]\geq 0, \forall i \in \{1,\hdots,n\}, \\
 +\infty & \textrm{otherwise}.
  \end{array}
 \right.
\end{equation}
With this specific choice, the optimality conditions of problem \eqref{eq:mainproblem} with respect to variable $\beta$ read:
\begin{equation}
    \label{eq:betaMP}
 \beta = \frac{u_1+u_2}{ \exp\left( - A_1 \alpha \right) + \exp\left( - A_2 \alpha \right)}.
\end{equation}
This expression can be seen as a simple estimator of $\beta$ knowing $u_1$, $u_2$ and $\alpha$. 
By replacing this expression in \eqref{eq:defF}, we obtain an optimization problem in variable $\alpha$ only:
\begin{equation}\label{eq:initialguess}
 \min_{\alpha\in\R^n}  \sum_{i=1}^n \sum_{j=1}^2 u_j[i] \left[(A_j \alpha)[i] + \log\left( \sum_{j=1}^2 \exp(-(A_j\alpha)[i])\right)\right] + R_\alpha(\alpha).
\end{equation}
\begin{proposition}
 Problem \eqref{eq:initialguess} is convex for a convex regularizer $R_\alpha$.
\end{proposition}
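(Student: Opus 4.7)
The plan is to isolate the only nonlinear component of the objective, namely the log-sum-exp term, and to invoke two standard facts: that $\lse$ is convex on $\R^2$ and that convexity is preserved under affine precomposition and nonnegative linear combination.

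First, I would relabel the inner summation index (which clashes with the outer $j$ in \eqref{eq:initialguess}) and rewrite the data-fidelity part as
\begin{equation*}
G(\alpha) = \sum_{i=1}^n \Bigl[ u_1[i]\,(A_1\alpha)[i] + u_2[i]\,(A_2\alpha)[i] + (u_1[i]+u_2[i])\,\ell_i(\alpha)\Bigr],
\end{equation*}
where $\ell_i(\alpha) = \lse\bigl(-(A_1\alpha)[i],\,-(A_2\alpha)[i]\bigr)$ and $\lse(s,t) = \log(e^s+e^t)$. The first two contributions are linear in $\alpha$ and thus convex.

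Second, I would recall that $\lse:\R^2\to\R$ is convex; this is a classical result that follows either from a direct computation showing its Hessian is positive semidefinite, or from Hölder's inequality. Since $\ell_i$ is the composition of $\lse$ with the affine map $\alpha\mapsto (-(A_1\alpha)[i],\,-(A_2\alpha)[i])$, the composition rule preserves convexity, so each $\ell_i$ is convex in $\alpha$.

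Third, the weights $u_1[i]+u_2[i]$ are nonnegative since $u_1,u_2$ are realizations of Poisson variables (equivalently, because the proposition is stated under the standing nonnegativity assumption on the observations), so $G$ is a nonnegative linear combination of convex functions, hence convex. Adding the convex regularizer $R_\alpha$ preserves convexity, which establishes the proposition. There is no genuine obstacle in the argument: the only non-mechanical step is the convexity of $\lse$, which is a textbook fact.
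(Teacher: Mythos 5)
Your proof is correct and follows essentially the same route as the paper's: split off the linear terms and recognize the remaining term as the convex $\lse$ function precomposed with an affine map. You add a useful detail the paper leaves implicit, namely that the weights $u_1[i]+u_2[i]$ multiplying the $\lse$ terms must be nonnegative for the sum to remain convex, which holds here since the observations are (realizations of) Poisson counts.
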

\begin{proof}
The term $u_j[i](A_j \alpha)[i]$ is linear, hence convex. The term $\log\left( \sum_{j=1}^2 \exp(-(A_j\alpha)[i])\right)$ is the composition of the convex logsumexp function with a linear operator, hence it is convex.
\end{proof}

The above observation motivates using the minimizer of \eqref{eq:initialguess} as an initial guess. Then, it is possible to use an arbitrary convex regularizer $R_\beta$ and to minimize \eqref{eq:mainproblem} using an alternate minimization between $\alpha$ and $\beta$. This idea is captured in Algorithm \ref{alg:main}, it ensures a monotonic decay of the cost function: $F(\alpha_{k+1},\beta_{k+1})\leq F(\alpha_k,\beta_k)$.
However the algorithm does not necessarily converge to a stationary point. This is not a critical issue since we will see later that only 1 iteration is usually preferable.
\begin{algorithm}
\caption{An algorithm to solve problem \eqref{eq:mainproblem}}
\label{alg:main}
\begin{algorithmic}[1]
\State \textbf{Input:} Initial guess $u_1$, $u_2$, $A_1$, $A_2$ and $Nit$.
\State \textbf{Output:} $\hat \alpha$ and $\hat \beta$, estimates of the attenuation and density.
\State Find $\alpha_0$, the minimizer of \eqref{eq:initialguess}.
\State Find $\displaystyle \beta_0=\argmin_{\beta\in \R^n_+} F(\alpha_0,\beta)$.
\For{$k=1$ to $Nit$}
\State Find $\displaystyle \alpha_k=\argmin_{\alpha\in \R^n_+} F(\alpha,\beta_{k-1})$.
\State Find $\displaystyle \beta_k=\argmin_{\beta\in \R^n_+} F(\alpha_{k},\beta)$.
\EndFor
\end{algorithmic}
\end{algorithm}

\subsection{Minimizing the initial convex program: warm start}

We now delve into the numerical resolution of the warm start initialization \eqref{eq:initialguess}. 
First, we need to choose a convex regularizer $R_\alpha$. 
In this paper, we propose to simply use the total variation \cite{rudin1992nonlinear}, which is well known to preserve sharp edges.
Its expression is given by:
\begin{equation}
 R_\alpha(\alpha) = \lambda_\alpha\sum_{i=1}^n \|(\nabla \alpha)[i]\|_2,
\end{equation}
where $\nabla :\R^n \to \R^{dn}$ is a discretization of the gradient and $\lambda_\alpha\geq 0$ is a regularization parameter. We will use the standard discretization proposed in \cite{chambolle2004algorithm} in our numerical experiments.

Problem \eqref{eq:initialguess} is convex, but rather hard to minimize for various reasons listed below. 
First, the vectors $\alpha$ and $\beta$ may be very high dimensional, preventing the use of an arbitrary black-box method. 
Second, the regularizer $R_\alpha$ is non differentiable.
Third, the operators $A_i$ have a spectral norm depending on the dimension $n$, preventing the use of gradient based methods since the Lipschitz constant of the gradient would be too high, see Proposition \ref{prop:spectral}. 
Last, the proximal operator associated to the logsumexp function has no simple analytical formula.

\begin{proposition}\label{prop:spectral}
 Matrix $A_1$ in \eqref{eq:matrixA1} satisfies $\|A_1\|_{2\to 2}\gtrsim n$, where $\|\cdot\|_{2\to 2}$ stands for the spectral norm.
\end{proposition}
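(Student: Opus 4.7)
The plan is to lower-bound the spectral norm by exhibiting a single unit vector $x$ for which $\|A_1 x\|_2$ is of order $n$. Recall that $\|A_1\|_{2\to 2} = \sup_{\|x\|_2 = 1} \|A_1 x\|_2$, so any concrete test vector yields a lower bound.

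The natural candidate is the normalized all-ones vector $x = \mathbf{1}/\sqrt{n}$. Since $A_1$ is the cumulative sum operator, $(A_1 x)[i] = i/\sqrt{n}$ for $i \in \{1,\dots,n\}$. Squaring and summing gives
\begin{equation*}
\|A_1 x\|_2^2 = \frac{1}{n}\sum_{i=1}^n i^2 = \frac{(n+1)(2n+1)}{6},
\end{equation*}
so $\|A_1 x\|_2 \sim n/\sqrt{3}$ as $n \to \infty$. Since $\|x\|_2 = 1$, this immediately yields $\|A_1\|_{2\to 2} \geq \|A_1 x\|_2 \gtrsim n$, which is the claim.

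There is essentially no obstacle in this argument; the only non-mechanical step is identifying a test vector whose image under $A_1$ has entries that grow linearly in the index. Any vector whose partial sums blow up, such as $x \propto \mathbf{1}$, will work. One could alternatively reach the same conclusion via the Frobenius lower bound $\|A_1\|_{2\to 2} \geq \|A_1\|_F/\sqrt{n}$ combined with $\|A_1\|_F^2 = n(n+1)/2$, but the direct test-vector approach is sharper and more transparent. Since a matching upper bound $\|A_1\|_{2\to 2} \lesssim n$ also holds (via $\|A_1\|_{2\to 2} \leq \|A_1\|_F$), the proof in fact establishes $\|A_1\|_{2\to 2} = \Theta(n)$.
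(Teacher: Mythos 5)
Your proof is correct and follows exactly the same route as the paper's: testing $A_1$ on the normalized all-ones vector $\mathbf{1}/\sqrt{n}$, whose image has entries $i/\sqrt{n}$, and summing the squares to get a lower bound of order $n^2$ for $\|A_1\|_{2\to 2}^2$. The additional remarks about the Frobenius bound and the matching upper bound are fine but not needed for the stated claim.
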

\begin{proof}
 \begin{align*}
\|A_1\|_{2\to 2}^2  \geq \left\|A_1 \begin{pmatrix}
                                   1/\sqrt{n} \\ \vdots \\ 1/\sqrt{n}
                                  \end{pmatrix} \right\|_2^2
                    \geq \frac{1}{n} \left\|\begin{pmatrix}
                                                            1 \\ 2 \\ \vdots \\ n
                                                           \end{pmatrix}\right\|_2^2
                    \gtrsim \frac{n^3}{n} =n^2.
 \end{align*}
\end{proof}

A large number of splitting methods have been developed to solve problems of type \eqref{eq:initialguess}, and we refer to the excellent review paper \cite{combettes2011proximal} for an overview. Among them, the Simultaneous Direction Method of Multipliers (SDMM), a variant of the ADMM \cite{fortin2000augmented,ng2010solving} is particularily adapted to the structure of our problem. This algorithm allows solving problems of type:
\begin{equation}\label{eq:SDMMprob}
 \min_{\alpha \in \R^n} g_1(L_1\alpha) + \hdots + g_m(L_m\alpha), 
\end{equation}
where functions $g_i:\R^n\to \R\cup\{+\infty\}$ are convex closed and the operators $L_i:\R^n\to \R^{m_i}$ are linear and such that $Q=\sum_{i=1}^n L_i^TL_i$ is an invertible matrix. The SDMM then takes the algorithmic form described in Algorithm \ref{alg:SDMM}.

\begin{algorithm}
\caption{The SDMM algorithm to solve \eqref{eq:SDMMprob}}
\label{alg:SDMM}
\begin{algorithmic}[1]
 \State \textbf{input:} $Nit$, $\gamma>0$, $(y_{i,0})_{1\leq i \leq m}$, $(z_{i,0})_{1\leq i \leq m}$
 \For{$k=1$ to $Nit$}
 \State $x_k=Q^{-1}\sum_{i=1}^m L_i^T(y_{i,k}-z_{i,k})$
 \For{$i=1$ to $m$}
 \State $s_{i,k}=L_ix_k$.
 \State $y_{i,k+1}=\prox_{\gamma g_i}(s_{i,k}+z_{i,k})$
 \State $z_{i,k+1}=z_{i,k}+s_{i,k}-y_{i,k+1}$
 \EndFor
 \EndFor
\end{algorithmic}
\end{algorithm}

To cast problem \eqref{eq:initialguess} into form \eqref{eq:SDMMprob}, we use the following choices.
We set 
 \begin{equation}\label{eq:defL1}
  \begin{array}{lll}
      L_1:&\R^n&\to \R^{2n}     \\
      &\alpha &\mapsto c_1\begin{pmatrix}
                       A_1 \alpha \\
                       A_2 \alpha
                      \end{pmatrix}
    \end{array},
 \end{equation}
 \begin{equation}
   \begin{array}{lll}
    g_1 & : \R^{2n} &\to \R \cup \{+\infty\} \\
    & \begin{pmatrix}
      z_1 \\
      z_2
    \end{pmatrix} 
    &\mapsto  \sum_{i=1}^n \sum_{j=1}^2 u_j[i] \left[z_j[i]/c_1 + \log\left( \sum_{j=1}^2 \exp \left(-z_j[i]/c_1\right)\right)\right],
   \end{array}
 \end{equation}
 \begin{equation}
  L_2 = c_2\nabla  \mbox{ and } 
  g_2 
  \begin{pmatrix}
  z_1 \\ \vdots  \\ z_d
  \end{pmatrix}
  = \frac{\lambda}{c_2} \sum_{i=1}^n \sqrt{z_1^2[i] + \hdots + z_d[i]^2},
 \end{equation}
 \begin{equation}
  L_3 = c_3I_n \mbox{ and } g_3(z) = \iota_{\R_+^n}(z).
 \end{equation}
 The numbers $c_1,c_2,c_3$ are positive constants allowing to accelerate the algorithm's convergence by balancing the relative importance of each term. This can also be seen as a simple diagonal preconditioner.
 In our numerical experiments, we set $c_1=1$ and tune $c_2$ and $c_3$ manually to accelerate convergence.
 
 In order to apply Algorithm \ref{alg:SDMM}, we need to compute the proximal operators of each function $g_i$, defined by:
 \begin{equation}
  \prox_{\gamma g_i} (z_0) = \argmin_{z\in \R^{m_i}} \gamma g_i(z) + \frac{1}{2}\|z-z_0\|_2^2.
 \end{equation}
 The proximal operators of $g_2$ and $g_3$ have closed form solutions found in nearly all recent total variation minimization solvers. We refer to \cite{ng2010solving} for instance.
 Unfortunately, the proximal operator of $g_1$ has no closed-form expression. 
 In order to compute it, we propose using a non trivial Newton based algorithm described in section \ref{appendix:newton1}. 
 Finally, we need to evaluate matrix-vector products with $Q^{-1}$. 
 This can be achieved using either a LU factorization or a conjugate gradient.
 In our codes, we simply use a conjugate gradient algorithm.

 To conclude this paragraph, we illustrate the results obtained by the described procedure in Figure \ref{fig:XP2}.
 
 \begin{figure}
 \centering
  \begin{subfigure}[b]{0.35\textwidth}
    \includegraphics[width=\textwidth]{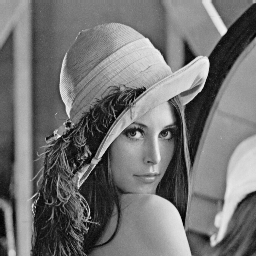}
    \caption{Density $\beta\in[0,100]^n$ }\label{fig:XP2beta}
  \end{subfigure}
  \begin{subfigure}[b]{0.35\textwidth}
    \includegraphics[width=\textwidth]{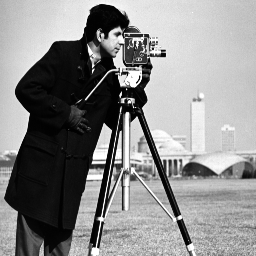}
    \caption{Attenuation $\alpha\in[0,0.03]^n$}\label{fig:XP2alpha}
  \end{subfigure}
  \begin{subfigure}[b]{0.35\textwidth}
    \includegraphics[width=\textwidth]{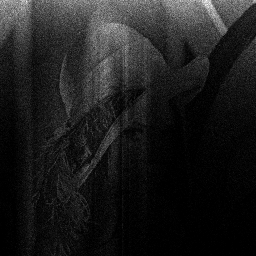}
    \caption{Image $u_1$}\label{fig:XP2u1}
  \end{subfigure}
  \begin{subfigure}[b]{0.35\textwidth}
    \includegraphics[width=\textwidth]{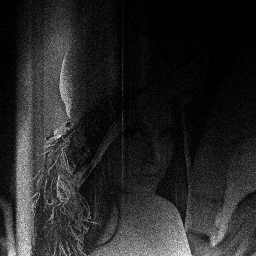}
    \caption{Image $u_2$}\label{fig:XP2u2}
  \end{subfigure}
  \begin{subfigure}[b]{0.35\textwidth}
    \includegraphics[width=\textwidth]{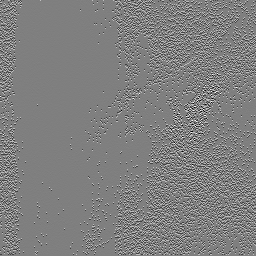}
    \caption{\eqref{eq:betaMP}: SNR=-53.1dB}\label{fig:XP2direct}
  \end{subfigure}
  \begin{subfigure}[b]{0.35\textwidth}
    \includegraphics[width=\textwidth]{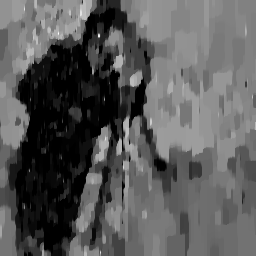}
    \caption{TV: SNR=11.9dB}\label{fig:XP2TV}
  \end{subfigure}  
  \caption{Warm start initialization. \eqref{fig:XP2beta} and \eqref{fig:XP2alpha} are the original density and attenuation. \eqref{fig:XP2u1} and \eqref{fig:XP2u2} are the observed signals. \eqref{fig:XP2direct} is the direct density estimate \eqref{eq:directinversionformula}. As can be seen, the formula yields useless results since it is completely unstable to noise. \eqref{fig:XP2TV} is the density estimate using the total variation solver. It allows recovering the main details of the cameraman, despite a significant amount of noise.}\label{fig:XP2}
\end{figure}

\subsection{Recovering the attenuation knowing the density}\label{sec:attenuationfromdensity}

In this paragraph, we focus on the resolution of:
\begin{align*}
\alpha_k& =\argmin_{\alpha\in \R^n} F(\alpha,\beta_{k-1}) \\
&= \argmin_{\alpha\in \R^n} h_1(L_1\alpha)+h_2(L_2\alpha) + h_3(L_3 \alpha),
\end{align*}
where $L_1$, $L_2$ and $L_3$ are defined as in the previous section, $h_2=g_2$, $h_3=g_3$ and
\begin{equation}
   \begin{array}{lll}
    h_1 & : \R^{2n} &\to \R \\
    & \begin{pmatrix}
      z_1 \\
      z_2
    \end{pmatrix} 
    &\mapsto  \sum_{j=1}^2\sum_{i=1}^n  u_j[i] z_j[i]/c_1 + \exp(-z_j[i]/c_1)\beta_{k-1}[i].
   \end{array}
 \end{equation}
 As in the previous section, the SDMM is a good candidate to solve this problem. 
 The only difficulty is to compute the proximal operator $\prox_{\gamma h_1}$. This amounts to solving $2n$ 1D optimization problems of the following type:
 \begin{equation}\label{eq:proxh1}
  \argmin_{z\in \R} \gamma u z + \gamma \exp(-z) \beta + \frac{1}{2} (z-z_0)^2,
 \end{equation}
 where $u$, $z_0$ and $\beta$ are real numbers.
 \begin{proposition}
  The solution $z^*$ of problem \eqref{eq:proxh1} is given by:
  \begin{equation}
    z^* = a +W(\beta\gamma \exp(-a)),
  \end{equation}
  where $a=z_0 - \gamma u$ and $W$ is the so-called Lambert W function, i.e. the reciprocal function of $x\mapsto x\exp(x)$. 
 \end{proposition}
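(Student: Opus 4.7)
The plan is to use first-order optimality: the objective is strictly convex (sum of a linear term, the strictly convex exponential $\gamma\beta\exp(-z)$ with $\beta\geq 0$, and a strictly convex quadratic), so the unique minimizer is characterized by vanishing of the derivative. Differentiating gives
\begin{equation*}
\gamma u - \gamma\beta\exp(-z) + (z - z_0) = 0,
\end{equation*}
which I rewrite as $z - a = \gamma\beta\exp(-z)$ with $a := z_0 - \gamma u$.

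Next I would perform the substitution $w := z - a$ to eliminate $a$ from the left-hand side. This turns the equation into
\begin{equation*}
w = \gamma\beta\exp(-(w+a)) = \gamma\beta\exp(-a)\exp(-w),
\end{equation*}
or equivalently $w\exp(w) = \gamma\beta\exp(-a)$. Since the right-hand side is nonnegative (as $\beta,\gamma\geq 0$), it lies in the range $[0,\infty)$ on which $x\mapsto x\exp(x)$ is a bijection from $[0,\infty)$ onto itself, so $W$ is well-defined at this argument and yields a unique real value. Thus $w = W(\gamma\beta\exp(-a))$ and consequently $z^* = a + W(\gamma\beta\exp(-a))$.

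There is essentially no obstacle: the derivation is a direct computation once optimality is written out. The only thing worth double-checking is that the Lambert $W$ function is applied to a value in its single-valued branch (the principal branch $W_0$), which is guaranteed here by $\beta,\gamma\geq 0$ together with the nonnegativity constraint implicitly assumed on the parameters of the Poisson model. If one wished to be thorough, one could also argue existence of the minimizer from coercivity of the quadratic term and continuity, but strict convexity plus the fact that the stationarity equation admits a solution (namely the one we just constructed) already yields both existence and uniqueness.
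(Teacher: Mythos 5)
Your proof is correct and follows the same route as the paper: write the first-order optimality condition, set $a=z_0-\gamma u$, substitute $w=z-a$, and recognize $w\exp(w)=\beta\gamma\exp(-a)$ as defining the Lambert $W$ function. The extra remarks on strict convexity and on the principal branch being well defined for a nonnegative argument are sound additions but do not change the argument.
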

\begin{proof}
 The optimality conditions for this problem read:
 \begin{equation}
  \gamma u- \beta\gamma \exp(-z^*) + z^* -z_0=0,
 \end{equation}
 so that
 \begin{equation}
  z^* = a + \beta\gamma \exp(-z^*),
 \end{equation}
 with $a=z_0-\gamma u$. Now, we can write $z^* = a + w$, hence $w =  \beta\gamma  \exp(-a - w)$, which is still equivalent to:
 \begin{equation}
  w\exp(w) = \beta\gamma \exp(-a),
 \end{equation}
 or $w = W(\beta\gamma \exp(-a))$.
\end{proof}
\begin{remark}
  The Lambert W function has been studied thoroughly. The standard approach to compute it consists of using Halley's method (a Househ\"older method with cubic convergence rate) initialized with the first terms of an asymptotic expansion. This method was proposed in the excellent review paper \cite{corless1996lambertw} and seems to be the default solver in MAPLE and MATLAB. Unfortunately, this method fails for our problem since the number $\beta\gamma \exp(-a)$ can exceed the maximum number available in double precision. We therefore develop a specific method in Appendix \ref{appendix:lambertw}.
\end{remark}

\subsection{Recovering the density knowing the attenuation}\label{sec:densityfromattenuation}
In this paragraph, we focus on the resolution of $\min_{\beta \in \R^n} F(\alpha_{k},\beta)$. 
This amounts to simultaneously correcting the attenuation and denoising the resulting image. 
This is a rather simple inverse problem, but it seems original due to the noise statistics. 
A Poisson distributed variable multiplied by a positive constant different from 1 is not Poisson anymore.
This makes the proposed algorithm similar, but different from existing approaches developed for Poisson noise in \cite{dupe2009proximal,steidl2010removing} for instance.

A simple idea to regularize the problem is to use the total variation again, i.e. to set $R_\beta(\beta) = \lambda_\beta\sum_{i=1}^n \|(\nabla \beta)[i]\|_2$. Once again the resulting problem can be solved with the SDMM. Let us detail this procedure. Define 
\begin{equation}
 a[i]=\sum_{j=1}^2 \exp(-(A_j\alpha)[i]) \mbox{ and } u[i]=\sum_{j=1}^2 u_j[i].
\end{equation}
The problem then reads:
\begin{align*}
 & \min_{\beta\in \R^n_+} \sum_{i=1}^n a[i]\beta[i]-u[i]\log(\beta[i]) + \lambda_\beta\sum_{i=1}^n \|(\nabla \beta)[i]\|_2 \\
 &=\min_{\beta\in \R^n} j_1(L_1\beta) + j_2(L_2\beta),
\end{align*}
with $L_1=c_1 I_n$, 
 \begin{equation}
   \begin{array}{lll}
    j_1 & : \R^{n} &\to \R \cup \{+\infty\} \\
    & z &\mapsto \iota_{\R_+^n}(z) + \frac{1}{c_1}\sum_{i=1}^n  a[i]z[i]-u[i]\log(z[i]),
   \end{array}
 \end{equation}
 $L_2=c_2 \nabla$ and
 \begin{equation}
    \begin{array}{lll}
    j_2 & : \R^{2n} &\to \R \\
    & \begin{pmatrix}
      z_1 \\
      z_2
    \end{pmatrix} 
    &\mapsto  \frac{\lambda_\beta}{c_2} \sum_{i=1}^n  \sqrt{z_1[i]^2+z_2[i]^2}.
   \end{array}
 \end{equation}

The proximal operators of $j_2$ is standard and we do not detail it here. The proximal operator of $j_1$ is provided below:
\begin{proposition}
 We have:
 \begin{equation}\label{eq:proxj1}
  \prox_{\gamma j_1} (z_0)= \frac{-(\gamma/c_1 a - z_0) + \sqrt{(\gamma/c_1 a - z_0)^2 + 4 \gamma u}}{2}.
 \end{equation}
\end{proposition}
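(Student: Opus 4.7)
The function $j_1$ is fully separable across coordinates: the indicator $\iota_{\R_+^n}$ decouples into per-coordinate nonnegativity constraints, and both the linear term $a[i]z[i]$ and the log term $u[i]\log(z[i])$ are sums of scalar functions. Consequently $\prox_{\gamma j_1}$ splits into $n$ independent scalar problems and I would reduce to the $i$-th subproblem
\begin{equation}
\min_{z \geq 0}\; \frac{\gamma}{c_1}\bigl(a[i]\, z - u[i]\log z\bigr) + \frac{1}{2}(z - z_0[i])^2.
\end{equation}

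Next I would establish existence, uniqueness and interiority of the minimizer. When $u[i]>0$, the objective is strictly convex on $(0,\infty)$, tends to $+\infty$ as $z\to 0^+$ through the logarithmic barrier $-u[i]\log z$, and tends to $+\infty$ as $z\to+\infty$ through the quadratic. Hence the unique minimizer lies in the open positive orthant, the nonnegativity constraint is inactive, and the Fermat condition alone characterizes the optimum.

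Then I would write stationarity,
\begin{equation}
\frac{\gamma a[i]}{c_1} - \frac{\gamma u[i]}{c_1\, z} + (z - z_0[i]) = 0,
\end{equation}
clear the denominator by multiplying through by $z>0$, and obtain the univariate quadratic
\begin{equation}
z^2 + \Bigl(\tfrac{\gamma a[i]}{c_1} - z_0[i]\Bigr)z - \tfrac{\gamma u[i]}{c_1} = 0.
\end{equation}
Applying the standard quadratic formula yields two roots whose product equals $-\gamma u[i]/c_1\le 0$; exactly one of them is nonnegative, namely the root obtained with the $+$ sign in front of the discriminant, and that root is precisely the expression appearing in \eqref{eq:proxj1} (identified with $c_1=1$ as used elsewhere by the authors).

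Finally I would dispatch the degenerate case $u[i]=0$ separately: the log term vanishes, the scalar problem reduces to a nonnegatively-constrained quadratic whose minimizer is $\max(0, z_0[i] - \gamma a[i]/c_1)$, and inspection shows the proposed formula collapses to the same value since the discriminant becomes $|\gamma a[i]/c_1 - z_0[i]|^2$. There is no genuine obstacle in this proof: it is a direct KKT calculation. The only point requiring care is the sign choice in the quadratic formula, which I settle via the product-of-roots sign argument rather than tedious case analysis on the sign of $\gamma a[i]/c_1 - z_0[i]$.
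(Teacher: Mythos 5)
Your proof is correct and follows essentially the same route as the paper's one\-/line argument (write the first\-/order optimality conditions of the separable scalar problem, clear the denominator to get a quadratic, and select its unique nonnegative root), while supplying the details the paper omits: separability, interiority via the logarithmic barrier, the sign choice via the product of roots, and the degenerate case $u[i]=0$. The only discrepancy is cosmetic: you attach the factor $1/c_1$ to the $u[i]\log z[i]$ term, which would put $4\gamma u/c_1$ rather than $4\gamma u$ under the square root, whereas the paper's own proof reads that term as $-u\log(z/c_1)$ so that $1/c_1$ contributes only an additive constant and the stated formula follows without invoking $c_1=1$.
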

\begin{proof}
 It suffices to write the first order optimality conditions of $\min_{z\geq 0} 1/2\|z-z_0\|_2^2 + a/c_1 z - u\log(z/c_1)$. This shows that $z$ is the root of a second order polynomial. Its only positive root is given in \eqref{eq:proxj1}.
\end{proof}

We show a typical result of total variation minimization in Figure \eqref{fig:XP3}. Pamareter $\lambda_\beta$ was chosen manually so as to maximize the SNR of the result. 

\begin{figure}
 \centering
  \begin{subfigure}[b]{0.35\textwidth}
    \includegraphics[width=\textwidth]{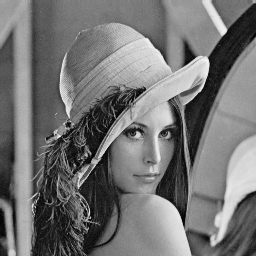}
    \caption{Density $\beta\in[0,100]^n$ }\label{fig:XP3beta}
  \end{subfigure}
  \begin{subfigure}[b]{0.35\textwidth}
    \includegraphics[width=\textwidth]{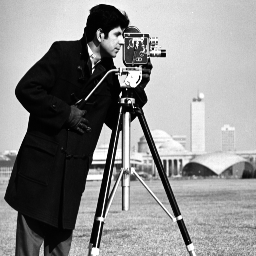}
    \caption{Attenuation $\alpha\in[0,0.03]^n$}\label{fig:XP3alpha}
  \end{subfigure}
  \begin{subfigure}[b]{0.35\textwidth}
    \includegraphics[width=\textwidth]{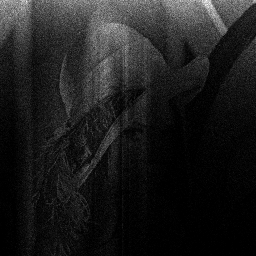}
    \caption{Image $u_1$}\label{fig:XP3u1}
  \end{subfigure}
  \begin{subfigure}[b]{0.35\textwidth}
    \includegraphics[width=\textwidth]{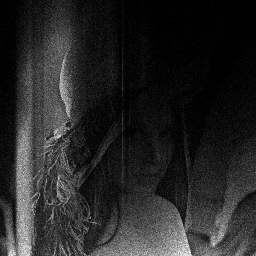}
    \caption{Image $u_2$}\label{fig:XP3u2}
  \end{subfigure}
  \begin{subfigure}[b]{0.35\textwidth}
    \includegraphics[width=\textwidth]{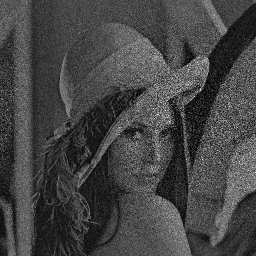}
    \caption{\eqref{eq:betaMP}: SNR=12.9dB}\label{fig:XP3MLE}
  \end{subfigure}
  \begin{subfigure}[b]{0.35\textwidth}
    \includegraphics[width=\textwidth]{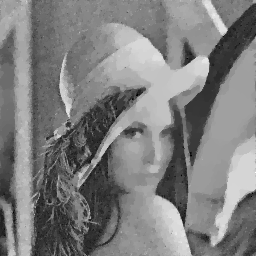}
    \caption{TV: SNR=21.9dB}\label{fig:XP3TV}
  \end{subfigure}  
  \caption{Recovering the density knowing the exact attenuation, with a non regularized estimator or a total variation solver. \eqref{fig:XP3beta} and \eqref{fig:XP3alpha} are the original density and attenuation. \eqref{fig:XP3u1} and \eqref{fig:XP3u2} are the observed signals. \eqref{fig:XP3MLE} is the direct density estimate \eqref{eq:betaMP}. \eqref{fig:XP3TV} is the density estimate using a total variation solver.}\label{fig:XP3}
\end{figure}

\section{Additional comments}\label{sec:numerical}

\subsection{Parameter selection}

\paragraph{Data terms}

The two data term parameters are $\lambda_\alpha$ and $\lambda_\beta$. They specify the regularity of the attenuation and the density respectively.
In all our experiments, we optimized them by trial and error. We observed experimentally, that similar results are obtained within a relatively large range, making a manual optimization quite easy. In addition, for a given measurement device, the same parameter is likely to be always the same, decreasing the interest of an automatized procedure such as SURE. 

\paragraph{Algorithms parameters}

The optimization algorithms are based on the SDMM and their convergence rates depend a lot on the paramaters $\gamma$, $c_1$, $c_2$, $c_3$ and $c_4$. 
They may converge to a satisfactory solution rapidly (about 50 iterations) or slowly (more than 10000 iterations) depending on these choices.
Unfortunately, we found no systematic method to choose them and also used a trial and error strategy in our numerical experiments. 
Our numerical experiments suggest that these parameters are suitable for a wide range of data (image size, maximum attenuation, image dynamics), so that the tedious tuning can be done once for all for a given application.

\subsection{Number of outer iterations}

Algorithm \ref{alg:main} depends on a number of outer iterations denoted $Nit$. One may wonder how many iterations are needed to obtain a satisfactory solution. It turns out that 1 iteration leads to the best results in terms of SNR: iterating more tends to degrade the solution, even though the cost function continues to decrease. The precise reason behind this is still unknown, but it is likely that this is total variation bias. Low contrasted solutions are favored at the expense of SNR. This seems to be one more example, where the Maximum A Posteriori principle should be taken with caution \cite{nikolova2007model,gribonval2011should,starck2013sparsity}. 
It is useful to get a rough idea of a functional to minimize (here it allowed designing the warm start estimate), but should not be considered as the best possible estimator. 

This remark being stated, the proposed algorithm can be seen as a simple two step procedure:
\begin{itemize}
 \item Find the warm start estimate \eqref{eq:initialguess}.
 \item Correct the density (see section \ref{sec:densityfromattenuation}).
\end{itemize}
It is possible to evaluate the attenuation once again (see section \ref{sec:attenuationfromdensity}), but we observed that this influenced the result very little. 


\subsection{Computing times}

All the experiments of the paper were performed on a laptop with an Intel i7 processor with 4-cores. 
The codes were written mostly in Matlab (natively parallel), with some parts written in C with OpenMP support.

The complexity of the proposed algorithms scale roughly linearly with the number of pixels $n$, as shown in Fig. \ref{fig:dependenceonn}.
We observed that the number of iterations of the SDMM to reach a given relative accuracy remains the same whatever the size $n$, while the cost per 
iteration scales linearly with it (at least for the cumulative sum integral operators considered herein). 

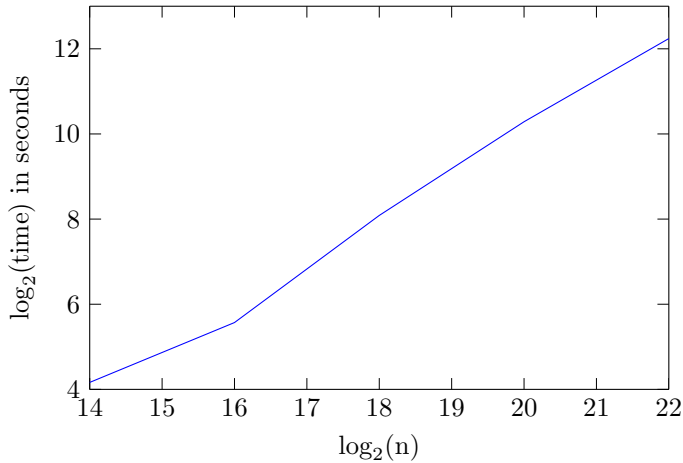
\begin{figure}
%
%
\begin{tikzpicture}

\begin{axis}[%
width=3in,
height=2in,
at={(0.809in,0.513in)},
scale only axis,
separate axis lines,
every outer x axis line/.append style={black},
every x tick label/.append style={font=\color{black}},
every x tick/.append style={black},
xmin=14,
xmax=22,
xlabel={$\text{log}_\text{2}\text{(n)}$},
every outer y axis line/.append style={black},
every y tick label/.append style={font=\color{black}},
every y tick/.append style={black},
ymin=4,
ymax=13,
ylabel={$\text{log}_\text{2}\text{(time)}$ in seconds},
axis background/.style={fill=white}
]
\addplot [color=blue, forget plot]
  table[row sep=crcr]{%
14	4.16349076201482\\
16	5.56957241746771\\
18	8.08585114049578\\
20	10.2866840594678\\
22	12.240966908493\\
};
\end{axis}
\end{tikzpicture}%
\caption{Time needed to compute the warm start estimate and correct the attenuation with respect to the number $n$ of pixels (in log-log scale). A linear regression indicates that the slope is roughly equal to 1, showing a linear dependency with respect to the number of pixels.}\label{fig:dependenceonn}
\end{figure}

As can be seen on Fig. \ref{fig:dependenceonn} the algorithm takes around 48 seconds for a $256\times 256$ image. 
Out of these, 45 seconds are spent to recover the attenuation, while the 3 remaining are dedicated to correct the density.

All codes can be easily parallelized on a GPU. A speed-up of 100 can be expected on such an architecture, making the proposed methods suitable for large 2D or 3D images.

\subsection{Influence of attenuation and signal-to-noise ratio}

Two parameters strongly influence the ability to recover the attenuation and density: the signals dynamics (or signal-to-noise ratio) and the attenuation dynamics. 

As the signal-to-noise ratio decreases, it becomes impossible to recover fine details. For instance, the fine stripes are not recovered in Fig. \ref{fig:alpha}, but they are recovered for signals with a much higher amplitude. In Fig. \ref{fig:SNRVSparameters}, it can indeed be verified that a high dynamics of $10^5$ allows recovering most of the stripes. 
This experiment shows that highly sensitive EMCCD cameras should be preferred over more standard devices for this specific application. 

The attenuation amplitude also plays a key role: if it is too low, then no attenuation can be detected. On the contrary, if it is too high, then the signals $u_1$ and $u_2$ will vanish too rapidly, making it impossible to evaluate the attenuation. This is illustrated in Fig. \ref{fig:SNRVSparameters}. It is remarkable that the algorithm manages to recover the attenuation partially for very low signal-to-noise ratio. In Fig. \ref{fig:XP13}, we observe that the attenuation is partially recovered with no more than 30 expected photons per pixel!

\begin{figure}
 \centering
  \begin{subfigure}[b]{0.2\textwidth}
    \includegraphics[width=\textwidth]{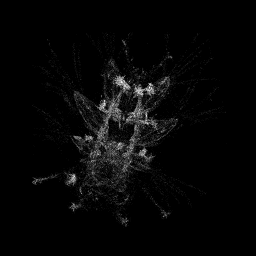}
    \caption{$\|\beta\|_\infty=30$}\label{fig:XP11}
  \end{subfigure}
  \begin{subfigure}[b]{0.2\textwidth}
    \includegraphics[width=\textwidth]{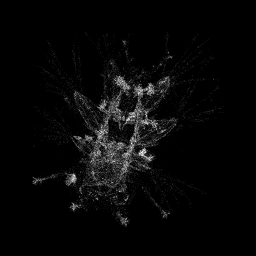}
    \caption{$\|\alpha\|_\infty=0.01$}\label{fig:XP12}
  \end{subfigure}
  \begin{subfigure}[b]{0.2\textwidth}
    \includegraphics[width=\textwidth]{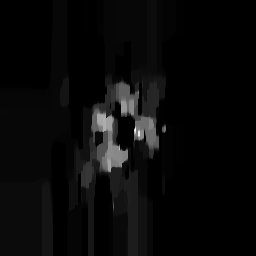}
    \caption{SNR: 3.3dB}\label{fig:XP13}
  \end{subfigure}
  \begin{subfigure}[b]{0.2\textwidth}
    \includegraphics[width=\textwidth]{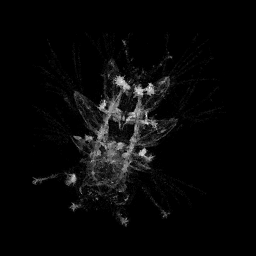}
    \caption{SNR: 15.1dB}\label{fig:XP14}
  \end{subfigure}
  
    \begin{subfigure}[b]{0.2\textwidth}
    \includegraphics[width=\textwidth]{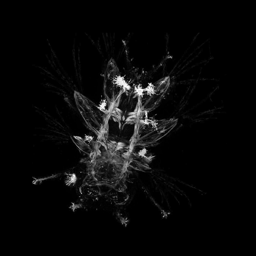}
    \caption{$\|\beta\|_\infty=10^4$}\label{fig:XP21}
  \end{subfigure}
  \begin{subfigure}[b]{0.2\textwidth}
    \includegraphics[width=\textwidth]{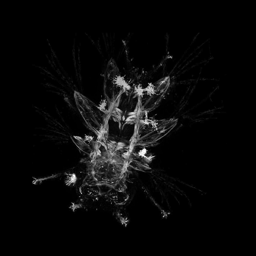}
    \caption{$\|\alpha\|_\infty=0.01$}\label{fig:XP22}
  \end{subfigure}
  \begin{subfigure}[b]{0.2\textwidth}
    \includegraphics[width=\textwidth]{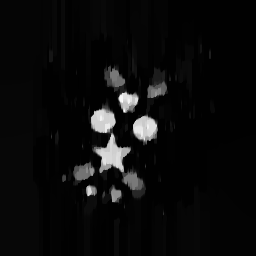}
    \caption{SNR: 9.2dB}\label{fig:XP23}
  \end{subfigure}
  \begin{subfigure}[b]{0.2\textwidth}
    \includegraphics[width=\textwidth]{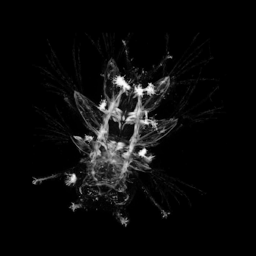}
    \caption{SNR: 21.2dB}\label{fig:XP24}
  \end{subfigure}
  
    \begin{subfigure}[b]{0.2\textwidth}
    \includegraphics[width=\textwidth]{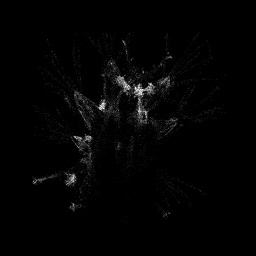}
    \caption{$\|\beta\|_\infty=30$}\label{fig:XP31}
  \end{subfigure}
  \begin{subfigure}[b]{0.2\textwidth}
    \includegraphics[width=\textwidth]{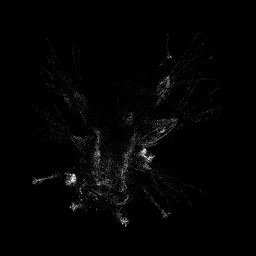}
    \caption{$\|\alpha\|_\infty=0.15$}\label{fig:XP32}
  \end{subfigure}
  \begin{subfigure}[b]{0.2\textwidth}
    \includegraphics[width=\textwidth]{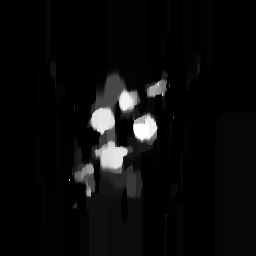}
    \caption{SNR: 6.0dB}\label{fig:XP33}
  \end{subfigure}
  \begin{subfigure}[b]{0.2\textwidth}
    \includegraphics[width=\textwidth]{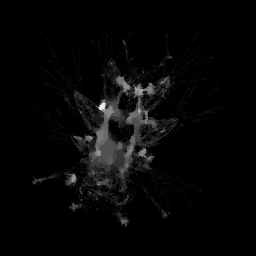}
    \caption{SNR: 9.7dB}\label{fig:XP34}
  \end{subfigure}
  
    \begin{subfigure}[b]{0.2\textwidth}
    \includegraphics[width=\textwidth]{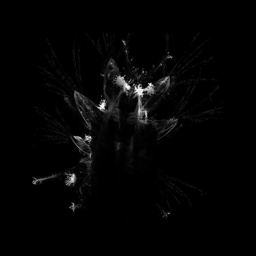}
    \caption{$\|\beta\|_\infty=10^4$}\label{fig:XP41}
  \end{subfigure}
  \begin{subfigure}[b]{0.2\textwidth}
    \includegraphics[width=\textwidth]{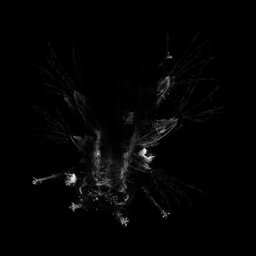}
    \caption{$\|\alpha\|_\infty=0.15$}\label{fig:XP42}
  \end{subfigure}
  \begin{subfigure}[b]{0.2\textwidth}
    \includegraphics[width=\textwidth]{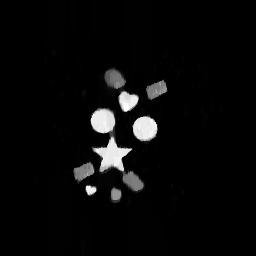}
    \caption{SNR: 11.2dB}\label{fig:XP43}
  \end{subfigure}
  \begin{subfigure}[b]{0.2\textwidth}
    \includegraphics[width=\textwidth]{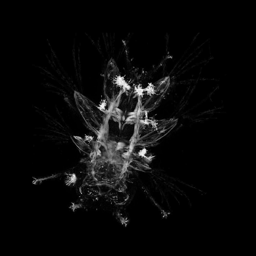}
    \caption{SNR: 21.1dB}\label{fig:XP44}
  \end{subfigure}
  
    \begin{subfigure}[b]{0.2\textwidth}
    \includegraphics[width=\textwidth]{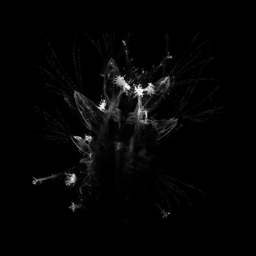}
    \caption{$\|\beta\|_\infty=10^5$}\label{fig:XP51}
  \end{subfigure}
  \begin{subfigure}[b]{0.2\textwidth}
    \includegraphics[width=\textwidth]{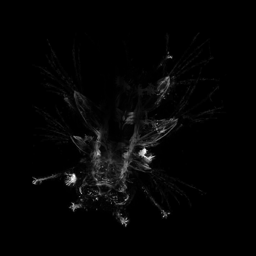}
    \caption{$\|\alpha\|_\infty=0.1$}\label{fig:XP52}
  \end{subfigure}
  \begin{subfigure}[b]{0.2\textwidth}
    \includegraphics[width=\textwidth]{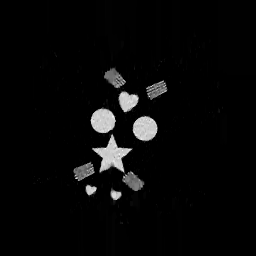}
    \caption{SNR: 13.5}\label{fig:XP53}
  \end{subfigure}
  \begin{subfigure}[b]{0.2\textwidth}
    \includegraphics[width=\textwidth]{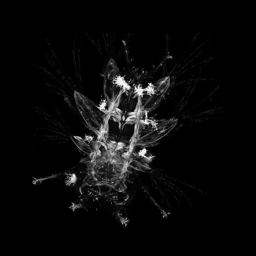}
    \caption{SNR: 44.3}\label{fig:XP54}
  \end{subfigure}  
  
  \caption{Ability to recover the attenuation and density depending on the density and attenuation amplitude. First and second column: attenuated images given as input to the algorithm. Third column: recovered attenuation $\alpha$. Fourth column: recovered denisty $\beta$.}\label{fig:SNRVSparameters}
\end{figure}

\subsection{Toolbox}

A Matlab toolbox containing all the main algorithms described in the paper is provided on the website of the authors \url{https://www.math.univ-toulouse.fr/~weiss/} and on GitHub \url{https://github.com/pierre-weiss/MAEC}. The Lambert W function and the proximal operator of logsumexp have been implemented with C-mex files with OpenMP support for multicore acceleration. Demonstration scripts are available for testing.

\section{Conclusion \& outlook}\label{sec:conclusion}

We proposed a robust and efficient approach to recover attenuation and correct density from multiview measurements. 
This principle was already known in the field of lidar and solved with simple filtering approaches. 
The algorithms proposed herein are based on a clear and versatile statistical framework. 
The approach seems promising in various devices such as lidar or some fluorescence microscopes. 
It is likely that its scope be much wider and we therefore provide a free Matlab toolbox on our website.

As a prospective, we plan to confront our algorithms with real data coming from lidar and microscopy. 
The total variation based algorithm to correct attenuation defects is somewhat disappointing since it is unable to recover fine textures. 
A promising and unexplored issue is to extend the nonlocal means algorithms \cite{nlm} to solve this problem. 
To conclude, let us mention a serious limitation of the proposed approach: 
it is not so common to find a couple optical system-sample, where attenuation dominates scattering. 
We do not know at the present time how many applications can reasonably be modeled by equation \eqref{eq:mainmodelequation}. 
This question is central to precisely understand the strengths and limits of the proposed approach. 

\section{Appendices}

\subsection{Proximal operator of logsumexp in dimension 2}\label{appendix:newton1}

In this section, we propose a fast and accurate numerical algorithm based on Newton's method to solve the following problem:
\begin{align*}
w&=\prox_{\gamma g_1}(z) \\
&=\argmin_{x\in \R^{2n}} \gamma \sum_{i=1}^n \sum_{j=1}^2 u_j[i] \left[x_j[i] + \log\left( \sum_{j=1}^2 \exp(-x_j[i])\right)\right] + \frac{1}{2}\|x-z\|_2^2,
\end{align*}
where 
$z=\begin{pmatrix}
z_1 \\z_2
\end{pmatrix}$ 
and 
$x=\begin{pmatrix}
x_1 \\x_2
\end{pmatrix}$ 
are vectors in $\R^{2n}$. This problem may seem innocuous at first sight, but turns out to be quite a numerical challenge.
The first observation is that it can be decomposed as $n$ \emph{independent} problems of dimension 2 since:
\begin{equation}\label{eq:dirtyproblem}
w[i]=\argmin_{(x_1,x_2)\in \R^{2}} \gamma \sum_{j=1}^2 u_j[i] \left[x_j + \log\left( \sum_{j=1}^2 \exp(-x_j)\right)\right] + \frac{1}{2}(x_j-z_j[i])_2^2.
\end{equation}
To simplify the notation, we will skip the index $i$ in what follows. The following proposition shows that our problem is equivalent to finding the proximal operator associated to the ``logsumexp'' function. 
\begin{proposition}
Define the logsumexp function $\lse(x_1,x_2)=\log\left( \sum_{j=1}^2 \exp(x_j)\right)$.
The solution of problem \eqref{eq:dirtyproblem} coincides with the opposite of the proximal operator of $\lse$:
 \begin{align}
  w[i] &= -\argmin_{(x_1,x_2)\in \R^2} a \lse(x_1,x_2)+ \frac{1}{2}((x_1-y_1)^2+(x_2-y_2)^2) \\
  &= -\prox_{a \lse} (y_1,y_2), \label{eq:proxlse}
 \end{align}
where $a=\gamma(u_1+u_2)$ and $y_j = \gamma u_j - z_j$.
\end{proposition}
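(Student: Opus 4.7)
The plan is to reduce the optimization problem to a standard proximal operator computation by exploiting the symmetry of the coefficient in front of the logsumexp term and applying a sign change to the variables.

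First, I would rewrite the objective in \eqref{eq:dirtyproblem} by separating the terms: the linear piece $\gamma\sum_j u_j x_j$, the logsumexp piece, and the quadratic piece. The crucial observation is that the coefficient in front of $\log(\exp(-x_1)+\exp(-x_2))$ is $\gamma(u_1+u_2)$, regardless of which index $j$ is summed, because $\sum_{j=1}^2 u_j$ factors out of that term. Hence the objective becomes
\begin{equation*}
\gamma u_1 x_1 + \gamma u_2 x_2 + a\,\lse(-x_1,-x_2) + \tfrac{1}{2}\bigl((x_1-z_1)^2+(x_2-z_2)^2\bigr),
\end{equation*}
with $a=\gamma(u_1+u_2)$.

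Next, I would perform the change of variables $\tilde x_j = -x_j$ so that $\lse(-x_1,-x_2) = \lse(\tilde x_1,\tilde x_2)$. Expanding the quadratic with $x_j=-\tilde x_j$ gives $(\tilde x_j+z_j)^2 = \tilde x_j^2 + 2z_j\tilde x_j + z_j^2$, and the linear part becomes $-\gamma u_j \tilde x_j$. Collecting linear-in-$\tilde x_j$ terms yields a coefficient $-\gamma u_j + z_j = -y_j$ with $y_j = \gamma u_j - z_j$, exactly as defined in the statement. Completing the square then rewrites the objective (up to an additive constant independent of $\tilde x$) as
\begin{equation*}
a\,\lse(\tilde x_1,\tilde x_2) + \tfrac{1}{2}\bigl((\tilde x_1-y_1)^2+(\tilde x_2-y_2)^2\bigr),
\end{equation*}
whose minimizer is by definition $\prox_{a\,\lse}(y_1,y_2)$.

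Finally, since $w[i]$ is the minimizer in the $x$ variable and $\tilde x = -x$, one obtains $w[i] = -\prox_{a\,\lse}(y_1,y_2)$, which is precisely \eqref{eq:proxlse}. There is no real obstacle here: the proof is essentially a bookkeeping exercise in change of variables and completion of squares; the one thing to highlight clearly is that the common factor $u_1+u_2$ in front of the logsumexp (coming from $\sum_j u_j$) is what makes this clean reduction possible — had the two terms inside the sum multiplied different logsumexp expressions, such a reduction would fail.
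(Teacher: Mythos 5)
Your proof is correct, and it takes a mildly different route from the paper's. The paper writes down the first-order optimality conditions of \eqref{eq:dirtyproblem}, substitutes $a=\gamma(u_1+u_2)$ and $y_j=\gamma u_j-z_j$, and then applies the sign change $x'_j=-x_j$ to recognize the resulting system as the stationarity conditions \eqref{eq:optcondlse} of $\prox_{a\lse}(y_1,y_2)$. You instead work at the level of the objective itself: you factor the common weight $u_1+u_2$ out of the logsumexp term, flip the sign of the variables, and complete the square to show that the two objectives agree up to an additive constant independent of the variable. The two arguments carry the same content (yours is the ``integrated'' version of the paper's), but your variant has a small advantage: identifying the objectives directly yields equality of the minimizers without having to argue separately that the stationarity system characterizes the unique minimizer, a step the paper's proof implicitly justifies via strong convexity of the objective. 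Both arguments hinge on the observation you correctly isolate, namely that the inner logsumexp does not depend on the outer summation index $j$, so its total coefficient is exactly $\gamma(u_1+u_2)=a$; your bookkeeping of the linear coefficient $z_j-\gamma u_j=-y_j$ after the sign flip is also correct.
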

\begin{proof}
The first order optimality conditions for problem \eqref{eq:dirtyproblem} read
\begin{equation}
\left\{
 \begin{array}{l}
  \gamma u_1 - \frac{\gamma(u_1+u_2)\exp(-x_1)}{\exp(-x_1)+\exp(-x_2)} + x_1 - z_1 = 0 \\
  \gamma u_2 - \frac{\gamma(u_1+u_2)\exp(-x_2)}{\exp(-x_1)+\exp(-x_2)} + x_2 - z_2 = 0.
 \end{array}\right.
\end{equation}
By letting $a=\gamma(u_1+u_2)$ and $y_j = \gamma u_j - z_j$, this equation becomes
\begin{equation}
\left\{
 \begin{array}{l}
   - \frac{a\exp(-x_1)}{\exp(-x_1)+\exp(-x_2)} + x_1 + y_1 = 0 \\
   - \frac{a\exp(-x_2)}{\exp(-x_1)+\exp(-x_2)} + x_2 + y_2  = 0.
 \end{array}\right.
\end{equation}
It now suffices to make the change of variable $x'_i=-x_i$ to retrieve the optimality conditions of problem \eqref{eq:proxlse}
\begin{equation}\label{eq:optcondlse}
\left\{
 \begin{array}{l}
   \frac{a\exp(x'_1)}{\exp(x'_1)+\exp(x'_2)} + x'_1 - y_1 = 0 \\
   \frac{a\exp(x'_2)}{\exp(x'_1)+\exp(x'_2)} + x'_2 - y_2  = 0.
 \end{array}\right.
\end{equation}
\end{proof}
\begin{remark}
To the best of our knowledge, this is the first attempt to find a fast algorithm to evaluate the prox of logsumexp.
This function is important in many regards. 
In particular, it is a $C^\infty$ approximation of the maximum value of a vector. 
In addition, its Fenchel conjugate coincides with the Shannon entropy restricted to the unit simplex.
We refer to \cite[\S 3.2]{hiriart2006note} for some details.
The algorithm that follows has potential applications outside the scope of this paper.
\end{remark}

We now design a fast and accurate minimization algorithm for problem \eqref{eq:proxlse} or equivalently, a root finding algorithm for problem \eqref{eq:optcondlse}. 
This algorithm differs depending on whether $y_1\geq y_2$ or $y_2\geq y_1$.
We focus on the case $y_1\geq y_2$. The case $y_2\geq y_1$ can be handled by symmetry.

Let $\lambda = \frac{\exp(x'_1)}{\exp(x'_1)+\exp(x'_2)}$ and notice that $\frac{\exp(x'_2)}{\exp(x'_1)+\exp(x'_2)}=1-\lambda$. Therefore \eqref{eq:optcondlse} becomes:
\begin{equation}
\left\{\begin{array}{l}
x'_1 = y_1 -a\lambda \\
x'_2 = y_2 - a(1-\lambda).
 \end{array}\right.
\end{equation}
Hence
\begin{equation}
\frac{1-\lambda}{\lambda} = \exp(x'_2-x'_1) = \exp(y_2-y_1-a)\exp(2a\lambda).
\end{equation}
Taking the logarithm on each side yields \footnote{Applying the logarithm is important for numerical purposes. When $y_2-y_1-a$ is very small, the exponential cannot be computed accurately in double precision.}:
\begin{equation}
\log(1-\lambda)-\log(\lambda) = y_2-y_1-a + 2a\lambda.
\end{equation}
We are now facing the problem of finding the root $\lambda^*$ of the following function:
\begin{equation}
 f(\lambda) = y_2-y_1-a + 2a\lambda - \log(1-\lambda)+\log(\lambda).
\end{equation}
There are two important advantages for this approach compared to the direct resolution of \eqref{eq:optcondlse}.
First, we have to solve a 1D problem instead of a 2D problem. 
More importantly, we directly constrain $x'$ to be of form $x'=y - a\delta$, where $\delta$ lives on the 2D simplex.

Let us collect a few properties of function $f$. First, we have:
\begin{equation}
 f'(\lambda) = 2a + \frac{1}{1-\lambda} + \frac{1}{\lambda} > 0, \forall \lambda\in (0,1).
\end{equation}
Therefore, $f$ is increasing on $(0,1)$. 
To use convergence results of Newton's algorithm, we need to compute $f''$ as well:
\begin{equation}\label{eq:fp}
 f''(\lambda) = -\frac{1}{\lambda^2}+\frac{1}{(1-\lambda)^2}.
\end{equation}

\begin{proposition}\label{prop:encadrement}
If $y_1\geq y_2$, then $x_1'\geq x_2'$ and 
\begin{equation}\label{eq:boundlse}
\max\left( \frac{1}{2}, \frac{1}{1+\exp(y_2-y_1+a)}\right)\leq \lambda^* \leq  \frac{1}{1+\exp(y_2-y_1)}. 
\end{equation}
\end{proposition}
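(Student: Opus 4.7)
The plan is to exploit the fact that $f$ is strictly increasing on $(0,1)$, as noted just before the statement. Consequently, for any candidate value $\lambda_0$, we have $\lambda^* \geq \lambda_0$ iff $f(\lambda_0) \leq 0$, and $\lambda^* \leq \lambda_0$ iff $f(\lambda_0) \geq 0$. Thus the entire proposition reduces to evaluating $f$ at three explicit points and checking signs. Note also that $a = \gamma(u_1+u_2) \geq 0$ since $\gamma > 0$ and the measurements are nonnegative, a fact that will be used implicitly.

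First, I would handle the inequality $x_1' \geq x_2'$. From $\lambda^* = \exp(x_1')/(\exp(x_1')+\exp(x_2'))$, the inequality $x_1' \geq x_2'$ is equivalent to $\lambda^* \geq 1/2$. Plugging $\lambda = 1/2$ into $f$, the terms $2a\lambda$ and $-a$ cancel, as do $-\log(1-\lambda)$ and $\log(\lambda)$, leaving $f(1/2) = y_2 - y_1 \leq 0$ by hypothesis. Monotonicity then gives $\lambda^* \geq 1/2$, which establishes both the $1/2$ lower bound and the relation $x_1' \geq x_2'$.

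Next I would tackle the upper bound $\lambda_U = 1/(1+\exp(y_2-y_1))$. The choice of $\lambda_U$ is designed so that the logit term $\log(\lambda_U) - \log(1-\lambda_U) = y_1 - y_2$ exactly cancels the constant $y_2 - y_1$ in $f$. Substituting, one finds $f(\lambda_U) = a(2\lambda_U - 1)$, which is nonnegative precisely when $\lambda_U \geq 1/2$, i.e., when $y_2 \leq y_1$; this is our hypothesis, so $f(\lambda_U) \geq 0$ and hence $\lambda^* \leq \lambda_U$. For the remaining lower bound $\lambda_L = 1/(1+\exp(y_2-y_1+a))$, the same trick works: $\log(\lambda_L)-\log(1-\lambda_L) = y_1 - y_2 - a$, so that $f(\lambda_L) = 2a(\lambda_L - 1) \leq 0$ since $\lambda_L \in (0,1)$ and $a \geq 0$, yielding $\lambda^* \geq \lambda_L$ by monotonicity.

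There is no real obstacle here: the whole proof is essentially the observation that the two candidate bounds $\lambda_L$ and $\lambda_U$ are constructed precisely so that the logit term kills the affine piece of $f$, leaving a trivially signed remainder. The only care needed is to combine the two lower bounds into $\max(1/2,\lambda_L)$ and to track that $a\geq 0$, which is where the sign of $a(2\lambda_U-1)$ and $2a(\lambda_L-1)$ depends on our hypothesis rather than on $a$.
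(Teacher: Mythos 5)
Your proof is correct and follows essentially the same route as the paper: exploit the strict monotonicity of $f$ on $(0,1)$ and check the sign of $f$ at the candidate endpoints, with the computations $f(1/2)=y_2-y_1$ and $f(\lambda_U)=a(2\lambda_U-1)$ matching the paper's. You are in fact slightly more complete, since you also verify $f(\lambda_L)=2a(\lambda_L-1)\leq 0$ for the lower bound $\lambda_L=\frac{1}{1+\exp(y_2-y_1+a)}$ (which the paper omits) and you derive $x_1'\geq x_2'$ from $\lambda^*\geq 1/2$ rather than by the paper's contradiction argument; both variants are valid.
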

\begin{proof}
 The first statement can be proven by contradiction. 
 Assume that $x_2'> x_1'$, then equation \eqref{eq:optcondlse} indicates that $y_2>y_1$.
 
 For the second statement, it suffices to evaluate $f$ at the extremities of the interval since $f'>0$.
 We get $f(1/2)=y_2-y_1\leq 0$ and $f\left(\frac{1}{1+\exp(y_2-y_1)}\right) = -a + \frac{2a}{1 + \exp(y_2-y_1)} \geq 0$.
\end{proof}
\begin{proposition}
Set $\lambda_0 = \frac{1}{1+\exp(y_2-y_1)}$. 
Then, the following Newton's method 
\begin{equation}
 \lambda_{k+1} = \lambda_k - \frac{f(\lambda_k)}{f'(\lambda_k)}
\end{equation}
converges to the root $\lambda^*$ of $f$, with a locally quadratic rate.
\end{proposition}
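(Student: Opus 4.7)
The plan is to show that $\lambda_0$ lies on the ``good side'' of the root in a region where $f$ is convex, so that Newton's iterates decrease monotonically toward $\lambda^*$, after which local quadratic convergence follows from the standard Newton convergence theorem.

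First I would locate the starting point. By Proposition \ref{prop:encadrement} we have $\lambda^* \leq \lambda_0 = \frac{1}{1+\exp(y_2-y_1)}$ and $\lambda^* \geq \tfrac{1}{2}$ (since $y_1\geq y_2$). In particular $\lambda_0 \in [\lambda^*, 1)$ and the whole interval $[\lambda^*, \lambda_0]$ lies in $[\tfrac12, 1)$. Next, from \eqref{eq:fp} one has $f''(\lambda) = \frac{1}{(1-\lambda)^2} - \frac{1}{\lambda^2} > 0$ for all $\lambda > \tfrac12$, so $f$ is strictly convex on $[\tfrac12, 1)$, and in particular on $[\lambda^*, \lambda_0]$. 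Since we already know $f'>0$ on $(0,1)$, both the strict monotonicity and strict convexity are available throughout the region of interest.

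Next, I would establish monotone convergence. Because $f$ is strictly increasing with unique root $\lambda^*$ and $\lambda_0 \geq \lambda^*$, we have $f(\lambda_0)\geq 0$, hence the Newton update $\lambda_1 = \lambda_0 - f(\lambda_0)/f'(\lambda_0) \leq \lambda_0$. Convexity implies that the tangent line at $\lambda_0$ lies below the graph of $f$, so its unique zero $\lambda_1$ satisfies $\lambda_1 \geq \lambda^*$ (otherwise $0 = f(\lambda_1)_{\text{tan}} \leq f(\lambda_1) < f(\lambda^*) = 0$, a contradiction). Thus $\lambda^* \leq \lambda_1 \leq \lambda_0$. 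Iterating this argument, the sequence $(\lambda_k)$ remains in $[\lambda^*, \lambda_0] \subset [\tfrac12, 1)$, is nonincreasing, and bounded below; it therefore converges to some $\bar\lambda \geq \lambda^*$. Passing to the limit in the Newton map (valid because $f$ and $f'$ are continuous and $f'$ stays bounded away from $0$ on the compact interval $[\lambda^*,\lambda_0]$) gives $f(\bar\lambda) = 0$, so $\bar\lambda = \lambda^*$.

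Finally, for the quadratic rate, I would invoke the standard local convergence theorem for Newton's method: $f$ is $C^\infty$ on a neighborhood of $\lambda^*$ and $f'(\lambda^*) \geq 2a + 4 > 0$, so there exist constants $C>0$ and a neighborhood of $\lambda^*$ such that $|\lambda_{k+1} - \lambda^*| \leq C |\lambda_k - \lambda^*|^2$ once $\lambda_k$ is close enough. Since we proved $\lambda_k \to \lambda^*$, this kicks in eventually. The main obstacle, and the only non-boilerplate step, is arguing that the Newton iterates never leave the convexity region $[\tfrac12,1)$, i.e.\ that the chosen starting point $\lambda_0$ simultaneously dominates $\lambda^*$ (so $f(\lambda_k)\geq 0$) and sits where $f''>0$; this is exactly what Proposition \ref{prop:encadrement} and the sign analysis of $f''$ deliver.
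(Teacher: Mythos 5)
Your argument is correct and follows essentially the same route as the paper: use Proposition~\ref{prop:encadrement} to place $\lambda_0$ above $\lambda^*$ inside the region $[\tfrac12,1)$ where $f''\geq 0$, deduce that the Newton iterates decrease monotonically while staying above $\lambda^*$, pass to the limit, and invoke a standard local theorem for the quadratic rate. One small phrasing caveat: justify $\lambda_1\geq\lambda^*$ by evaluating the tangent-below-graph inequality at $\lambda^*$ (which is guaranteed to lie in the convexity region $[\tfrac12,\lambda_0]$) rather than at $\lambda_1$, since a priori $\lambda_1$ could fall below $\tfrac12$ where $f$ is concave; this is exactly what the paper's inequality $f(\lambda_k)\leq f'(\lambda_k)(\lambda_k-\lambda^*)$ accomplishes.
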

\begin{proof}
First notice that $f''(\lambda)\geq 0$ on the interval $[1/2,1)$. 
Hence $f''$ is also positive on $I=[\lambda^*,\lambda_0]$. 
This ensures that 
\begin{equation}
\lambda_0 \geq \lambda_1 \geq \hdots \geq \lambda^*. 
\end{equation}
We prove this assertion by recurrence. Notice that $\lambda_0\geq \lambda^*$ by Proposition \ref{prop:encadrement}.
Now, assume that $\lambda_k\geq \lambda^*$, then 
\begin{equation}
f(\lambda_k) = f(\lambda^*) + \int_{\lambda^*}^{\lambda_k} f'(t)\,dt \leq f'(\lambda_k) (\lambda_k-\lambda^*).
\end{equation}
Hence, $\lambda_k-\lambda^* \geq \frac{f(\lambda_k)}{f'(\lambda_k)}$ and $\lambda_{k+1}\geq \lambda^*$.
In addition $\frac{f(\lambda_k)}{f'(\lambda_k)}\geq 0$ on $I$, so that $\lambda_{k+1}\geq \lambda_k$.

The sequence $(\lambda_k)_{k\in \N}$ is monotonically decreasing and bounded below, therefore it converges to some value $\lambda'\geq \lambda^*$. Necessarily $\lambda'=\lambda^*$, since for $\lambda'>\lambda^*$, $\frac{f(\lambda')}{f'(\lambda')}>0$.

To prove the locally quadratic convergence rate, we just invoke the celebrated Newton-Kantorovich's theorem \cite{polyak2007newton,ortega1968newton}, that ensures local quadratic convergence if $f''$ is bounded in a neighborhood of the minimizer.
\end{proof}

Finally, let us mention that computing $\lambda_0$ on a computer is a tricky due to underflow problems:
in double precision the command $1+\exp(y_2-y_1)$ will return $1$ for $y_2-y_1<-37\simeq \log(10^{-16})$. This may cause the algorithm to fail since $f$ and its derivatives are undefined at $\lambda=1$. In practice we therefore set $\lambda_0=1/(1+\exp(y_2-y_1))-10^{-16}$. Similarly, by bound \eqref{eq:boundlse}, we get $\lambda^*=1$ up to machine precision whenever $y_2-y_2-a<\log(10^{-16})$. Algorithm \ref{alg:Newton} summarizes all the ideas described in this paragraph.

An attentive reader may have remarked that the convergence of Newton's algorithm depends only on the difference $y(1)-y(2)$ and $a$. A shift of $y(1)$ and $y(2)$ by the same value does not change Newton's iteration. In Fig. \ref{fig:convlse}, we show that the algorithm behaves very well for a wide range of parameters. 
For $y(1)-y(2)$ and $a$ varying  in the interval $[2^{-10},2^{20}]$, the algorithm never requires more than $18$ iterations to reach machine precision and needs $2.8$ iterations in average.

\begin{algorithm}
\caption{An algorithm to compute $\prox_{a\lse}(y_1,y_2)$ with machine precision}
\label{alg:Newton}
\begin{algorithmic}[1]
 \State \textbf{Input:} $(y_1,y_2)\in \R^2, a\in \R_+$.
 \State \textbf{Output:} $(x_1,x_2)=\prox_{a\lse(y_1,y_2)}$.
 \State Set $\epsilon=10^{-16}$.
 \If{$y_1\geq y_2$}
 \If{$y_2-y_1+a<\log(\epsilon)$} 
 \State Set $\lambda=1$.
 \Else 
    \State Set $\lambda=\frac{1}{1+\exp(y_2-y_1)}-\epsilon$.
    \State Define  $d(\lambda)=\frac{y_2-y_1-a+2a\lambda+\log(\lambda/(1-\lambda))}{2a+\frac{1}{\lambda(1-\lambda)}}$.
     \While{$|d(\lambda)|>\epsilon$}
	   \State Set $\lambda = \lambda - d(\lambda)$.
	\EndWhile
 \EndIf
 \State Set $[x_1,x_2]=[y(1)-a\lambda,y(2)-a(1-\lambda)]$.
 \ElsIf{$y_1<y_2$}
 \If{$y_1-y_2+a<\log(\epsilon)$} 
 \State Set $\lambda=1$.
 \Else 
    \State Set $\lambda=\frac{1}{1+\exp(y_1-y_2)}-\epsilon$.
    \State Define $d=\frac{y_1-y_2-a+2a\lambda+\log(\lambda/(1-\lambda))}{2a+\frac{1}{\lambda(1-\lambda)}}$.
     \While{$|d|>\epsilon$}
	   \State Set $\lambda = \lambda - d(\lambda)$.
	\EndWhile
 \EndIf
 \State Set $[x_1,x_2]=[y(1)-a(1-\lambda),y(2)-a\lambda]$.
 \EndIf 
\end{algorithmic}
\end{algorithm}

\begin{figure}
 \centering
 \includegraphics[width=0.45\textwidth]{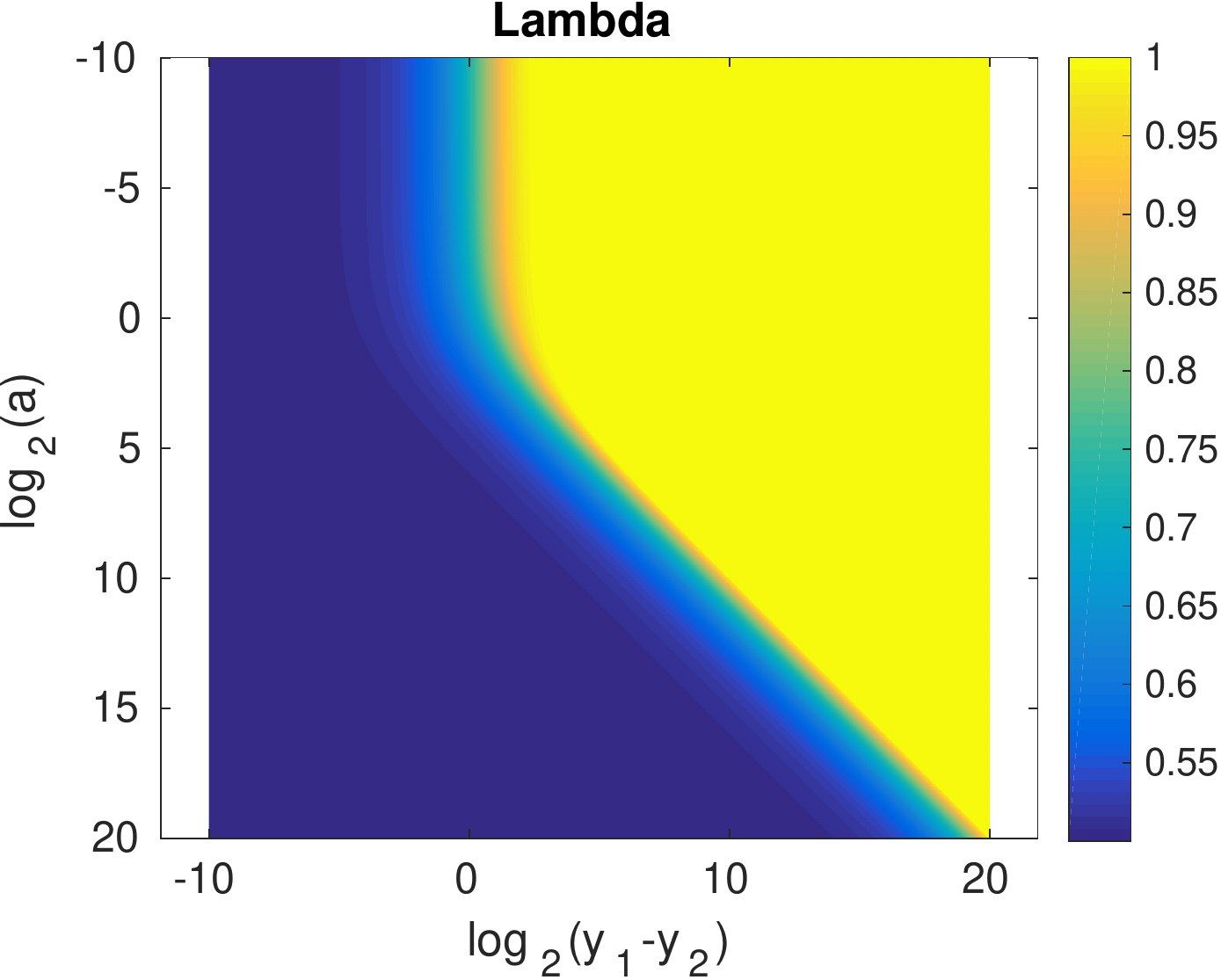} \ \
 \includegraphics[width=0.45\textwidth]{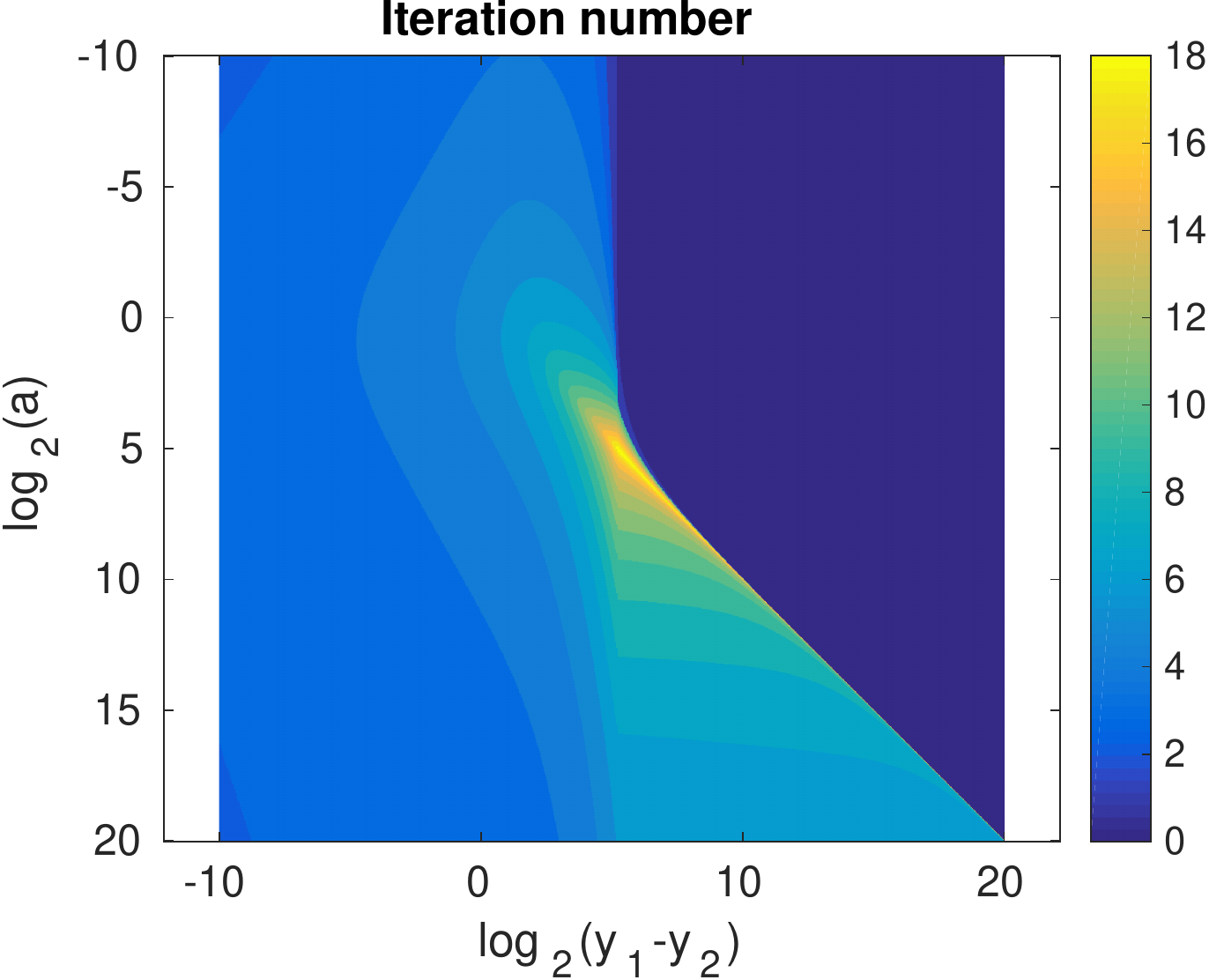}
 \caption{Performance evaluation for Newton's algorithm. Left: found value of $\lambda^*$ depending on $a$ and $y_1-y_2$. Right: number of iterations of Newton's method to reach machine precision.}\label{fig:convlse}
\end{figure}

\subsection{Computing the Lambert W function for large numbers}\label{appendix:lambertw}

In this section, we propose a numerical algorithm to solve the following equation:
\begin{equation}\label{eq:W}
 x\exp(x)=\exp(z).
\end{equation}
The solution is given by $x^*=W(\exp(z))$, where $W$ is the Lambert W function.

When $z$ is sufficiently small, Halley's method proposed in \cite{corless1996lambertw} can be applied. 
However, this method fails whenever $z$ is too large due to numerical instabilities. 
For instance, the command $\log(\exp(z))$ returns $+\infty$ for $z>710$.
In that case, it makes sense to apply a logarithm on each side of equation \eqref{eq:W} and solve the following problem instead:
\begin{equation}
 \log(x) + x = z.
\end{equation}
The two first terms of the asymptotic expansion of $W$ yield $W(\exp(z)) \simeq z - \log(z)$ for large $z$. 
This value is a good starting point for a root finding algorithm. Here, we simply propose to use Newton's algorithm for large $z$. The complete method is detailed in Algorithm \ref{alg:LambertW}. It never takes more than 5 iterations to reach machine precision. The initialization $w = \sqrt{5.43 z + 2} - 1$ is obtained using a Pad\'e approximation and the threshold $0.12$ has been determined experimentally in \cite{corless1996lambertw}.

\begin{algorithm}
\caption{An algorithm to compute the Lambert W function}
\label{alg:LambertW}
\begin{algorithmic}[1]
 \State \textbf{Input:} $z\in \R$.
 \State \textbf{Output:} $W(\exp(z))$.
 \State Set $\epsilon=10^{-16}$.
 \If{$z>0.12$}
 \State Set $w=z-\log(z)$.
 \State Define $d(w)=(\log(w)+w-z)/(1/w+1)$.
 \While{$|d(w)|>\epsilon$}
  \State Set $w = w - d(w)$.
  \EndWhile
 \Else \ \ (algorithm from \cite{corless1996lambertw})
  \State Set $t=\exp(t)$.
  \State Set $w = \sqrt{5.43 t + 2} - 1$.
  \State Define $d(w)=\frac{(w\exp(w)-t)}{\exp(w)w - (w+2)(w\exp(w)-t)/2w}$.
  \While{$|d(w)|>\epsilon$}
    \State Set $w = w - d(w)$.
  \EndWhile
 \EndIf
\end{algorithmic}
\end{algorithm}

\section*{Acknowledgment}

V. Debarnot is supported by Plan CANCER, MIMMOSA project.
The authors wish to thank Emilio Gualda, Jan Huisken, Philipp Keller, Th\'eo Liu, J\"urgen Mayer and Anne Sentenac for interesting discussions and feedbacks on the model.


\bibliographystyle{IEEEtran}
\bibliography{Biblio}

\end{document}